\newcommand{\allones}{\mathds{1}}
\newcommand{\DW}{\mathsf{DW}}
\renewcommand{\DH}{\mathsf{DH}}
\newcommand{\DQ}{\mathsf{DQ}}
\newcommand{\pg}{\mathsf{pg}}
\newcommand{\Q}{\mathsf{Q}}
\newcommand{\PG}{\mathsf{PG}}
\newcommand{\PSL}{\mathsf{PSL}}
\newcommand{\bigperp}{%
  \mathop{\mathpalette\bigp@rp\relax}%
  \displaylimits
}
\newcommand{\bigp@rp}[2]{%
  \vcenter{
    \m@th\hbox{\scalebox{\ifx#1\displaystyle2.1\else1.5\fi}{$#1\perp$}}
  }%
}
 \newtheorem{theorem}{Theorem}[section]
 \newtheorem{lemma}[theorem]{Lemma}
 \newtheorem{corollary}[theorem]{Corollary}
\newtheorem{conjecture}[theorem]{Conjecture}
\newtheorem{xmpl}[theorem]{Example}
\newtheorem{problem}[theorem]{Problem}
\newtheorem{remark}[theorem]{Remark}
\renewcommand\le{\leqslant}
\renewcommand\ge{\geqslant}
\newcounter{followUpMarkerA}
\newcounter{followUpMarkerB}
\def\followups{ }
\newcommand{\followupAdd}[1]{\expandafter\def\expandafter\followups\expandafter{\followups{}\item \stepcounter{followUpMarkerB} Page \pageref{followupmarker\thefollowUpMarkerB}: #1}}
\title{Implications of vanishing Krein parameters on\\ Delsarte designs, with applications in finite geometry}
\shorttitle{Implications of vanishing Krein parameters on Delsarte designs}
\author{John Bamberg, Jesse Lansdown\thanks{This paper builds on part of the doctoral work of the second author under the supervision of the first.
The second author acknowledges the support of an Australian Government Research Training Program Scholarship and Australian Research Council Discovery Project DP200101951. We would also like to thank the anonymous referees and the editor for their valuable comments.}}
\institution{Centre for the Mathematics of Symmetry and Computation\\
Department of Mathematics and Statistics\\
The University of Western Australia, W.A., Australia\\
\texttt{John.Bamberg@uwa.edu.au, Jesse.Lansdown@uwa.edu.au}
}
\begin{document}
\maketitle

\begin{abstract}
In this paper we show that  if $\theta$ is a $T$-design of an association scheme $(\Omega, \mathcal{R})$, and the Krein parameters $q_{i,j}^h$ vanish for some $h \not \in T$ and all $i, j \not \in T$  ($i, j, h \neq 0$), then $\theta$ consists of precisely half of the vertices of $(\Omega, \mathcal{R})$ or it is a $T'$-design, where $|T'|>|T|$. 
We then apply this result to various problems in finite geometry. In particular, we show for the first time that nontrivial $m$-ovoids of generalised octagons of order $(s, s^2)$ do not exist. We give short proofs of similar results for
(i) partial geometries with certain order conditions;
(ii) thick generalised quadrangles of order $(s,s^2)$;
(iii) the dual polar spaces
$\DQ(2d, q)$, $\DW(2d-1,q)$ and $\DH(2d-1,q^2)$, for $d \ge 3$;
(iv) the Penttila-Williford scheme.
In the process of (iv), we also consider a natural generalisation of the Penttila-Williford scheme in $\Q^-(2n-1, q)$, $n\geqslant 3$.
\end{abstract}

\section{Introduction}

It is well known that vanishing Krein parameters of association schemes have important consequences in combinatorics, for example in determining the feasibility of parameter sets for distance-regular graphs, and in placing bounds upon the orders of generalised polygons. Vanishing Krein parameters may also be used to say something about subsets of the vertices of an association scheme. The key result of this paper is a simple observation (Theorem \ref{MyKreinTheorem}) which states that given certain vanishing Krein parameters, either the possible sums of eigenspaces containing a Delsarte design are constrained, or else the Delsarte design consists of exactly half of the vertices of the association scheme. This leads to various interesting consequences which we explore in this paper. We will assume the reader is familiar with the basic theory of association schemes, but we refer to \cite[Chapter 30]{Lint:2001aa} for background and notation.

The starting point for this work is the following theorem (Theorem \ref{thm:KreinProjection}) of Cameron, Goethals and Seidel \cite{CameronGoethalsSeidel1978_Krein}. A short proof of it was given by Martin \cite{Martin2001}. It states that the Schur product of two vectors\footnote{This is the entrywise product of two vectors.}, each lying in precisely one eigenspace, projects trivially to a third eigenspace if the corresponding Krein parameter vanishes. In particular, this gives us combinatorial meaning, since if the two vectors in question are characteristic vectors of subsets, then their Schur product indicates the intersection of the two sets. 

\begin{theorem}[{\cite[Proposition 5.1]{CameronGoethalsSeidel1978_Krein}}]\label{thm:KreinProjection}
Let $(\Omega, \mathcal{R})$ be a $d$-class association scheme and let $h, i, j \in \{1,\ldots, d\}$.
Write the simultaneous eigenspaces for $(\Omega, \mathcal{R})$ as $V_\ell$.
 If $u \in V_i$ and $v \in V_j$, and we have a vanishing Krein parameter $q_{i j}^h = 0$, then the Schur product $u \circ v$ has trivial projection
 to $V_h$.
\end{theorem}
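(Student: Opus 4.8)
The plan is to prove the statement directly. Writing $E_\ell$ for the primitive idempotent projecting orthogonally onto $V_\ell$, I must show that $E_h(u\circ v)=0$ whenever $u\in V_i$, $v\in V_j$ and $q_{ij}^h=0$. The difficulty is that the Krein parameter is defined through the \emph{matrices} $E_i,E_j$ via the Schur-product expansion $E_i\circ E_j=\tfrac{1}{|\Omega|}\sum_\ell q_{ij}^\ell E_\ell$ (the precise normalising constant being immaterial to the vanishing conclusion), whereas the claim concerns the Schur product of two \emph{individual} vectors; a single pair $(u,v)$ carries no direct information about $q_{ij}^h$. The main idea is to bridge this gap by summing over orthonormal bases and exploiting the nonnegativity of squared norms.

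Concretely, I would fix orthonormal bases $\{u_a\}$ of $V_i$ and $\{v_b\}$ of $V_j$ and consider $S:=\sum_{a,b}\lVert E_h(u_a\circ v_b)\rVert^2$. Each summand is nonnegative, and since $E_h$ is a symmetric idempotent we have $\lVert E_h(u_a\circ v_b)\rVert^2=(u_a\circ v_b)^\top E_h\,(u_a\circ v_b)$. Expanding entrywise and using the identity $(E_\ell)_{xy}=\sum_c (w_c)_x(w_c)_y$ valid for any orthonormal basis $\{w_c\}$ of $\operatorname{im}(E_\ell)$, the two basis sums collapse into the matrix entries of $E_i$ and $E_j$:
\begin{align*}
S &=\sum_{x,y}(E_h)_{xy}\Big(\sum_a (u_a)_x(u_a)_y\Big)\Big(\sum_b (v_b)_x(v_b)_y\Big)\\
&=\sum_{x,y}(E_h)_{xy}(E_i)_{xy}(E_j)_{xy}=\sum_{x,y}(E_h)_{xy}(E_i\circ E_j)_{xy}.
\end{align*}

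This last expression is the Frobenius inner product $\langle E_h,\,E_i\circ E_j\rangle$. Substituting the Krein expansion $E_i\circ E_j=\tfrac{1}{|\Omega|}\sum_\ell q_{ij}^\ell E_\ell$ and using the orthogonality relations $\operatorname{tr}(E_h E_\ell)=\delta_{h\ell}\dim V_h$, I obtain $S=\tfrac{\dim V_h}{|\Omega|}\,q_{ij}^h$.

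With this identity in hand the theorem is immediate. If $q_{ij}^h=0$ then $S=0$, and as $S$ is a sum of nonnegative terms every $\lVert E_h(u_a\circ v_b)\rVert$ vanishes, i.e. $E_h(u_a\circ v_b)=0$ for all $a,b$. Since any $u\in V_i$, $v\in V_j$ expand as $u=\sum_a c_a u_a$ and $v=\sum_b d_b v_b$, bilinearity of the Schur product gives $E_h(u\circ v)=\sum_{a,b}c_a d_b\,E_h(u_a\circ v_b)=0$, which is exactly the assertion that $u\circ v$ projects trivially to $V_h$. I expect the only real obstacle to be the bookkeeping in collapsing the double basis sum into $E_i\circ E_j$; everything else is forced, and indeed the same computation re-proves the Krein inequality $q_{ij}^h\ge 0$ as a by-product. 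For a non-symmetric (but commutative) scheme one replaces transposes by conjugate-transposes throughout and the argument is unchanged.
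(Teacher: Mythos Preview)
Your argument is correct. Note, however, that the paper does not give its own proof of this theorem: it is stated as background, attributed to Cameron, Goethals and Seidel, with a remark that Martin gave a short proof. So there is no in-paper argument to compare against. Your approach---summing $\lVert E_h(u_a\circ v_b)\rVert^2$ over orthonormal bases, collapsing to the triple Schur product $\sum_{x,y}(E_h)_{xy}(E_i)_{xy}(E_j)_{xy}$, and reading off $\tfrac{m_h}{|\Omega|}q_{ij}^h$ via the Krein expansion---is essentially the standard argument and is in the spirit of Martin's short proof. The computation is clean and the nonnegativity trick that forces every summand to vanish is exactly the right idea; as you note, it also recovers the Krein bound $q_{ij}^h\ge 0$.
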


Cameron, Goethals, and Seidel \cite{CameronGoethalsSeidel1978_Strongly} proved that strongly regular graphs with either $q_{11}^1=0$ or $q_{22}^2=0$ have strongly regular subconstituents about every vertex.
Moreover, they characterised such graphs, when connected, as being a pentagon, a Smith graph  (cf. \cite{Smith1975}), or the complement of a Smith graph. Similar results have been investigated for distance-regular graphs with slightly larger diameter
\cite{Godsil1992,JurisicKoolen2002}. There is also an important relation between vanishing Krein parameters and the \emph{triple intersection numbers}.
Given $x,y,z \in V(\Gamma)$, we define the triple intersection numbers
\[
p_{r,s,t}^{x,y,z} = | \{u : d(x,u) =r, d(y, u)=s, d(z,u)=t \} |.
\]

\begin{theorem}[\cite{CoolsaetJurisic2008}]\label{KreinAndTripleIntersection}
Let $(\Omega, \mathcal{R})$ be a $d$-class association scheme, and let $Q$ denote the matrix of dual eigenvalues of the association scheme. Then
$q_{ij}^h = 0$ if and only if
\[
\sum_{r,s,t = 0}^d Q_{ri} Q_{sj} Q_{th} p_{r,s,t}^{x,y,z} = 0,
\]
for all $x,y,z \in \Omega$.
\end{theorem}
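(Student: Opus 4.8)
The plan is to rewrite the displayed expression in terms of the primitive idempotents of the Bose--Mesner algebra, where Theorem~\ref{thm:KreinProjection} applies directly. Write $A_0=I,A_1,\dots,A_d$ for the adjacency matrices and $E_0,\dots,E_d$ for the primitive idempotents, related by $E_\ell=\tfrac{1}{|\Omega|}\sum_{m=0}^{d}Q_{m\ell}A_m$. Since $(A_r)_{xu}(A_s)_{yu}(A_t)_{zu}$ equals $1$ precisely when $(x,u)\in\mathcal R_r$, $(y,u)\in\mathcal R_s$ and $(z,u)\in\mathcal R_t$, we have $p_{r,s,t}^{x,y,z}=\sum_{u\in\Omega}(A_r)_{xu}(A_s)_{yu}(A_t)_{zu}$. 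Substituting this, exchanging the order of summation, and collapsing the sums over $r,s,t$ gives
\[
\sum_{r,s,t=0}^{d}Q_{ri}Q_{sj}Q_{th}\,p_{r,s,t}^{x,y,z}=|\Omega|^{3}\sum_{u\in\Omega}(E_i)_{xu}(E_j)_{yu}(E_h)_{zu}.
\]
Thus it suffices to prove that $q_{ij}^{h}=0$ if and only if $\sum_{u}(E_i)_{xu}(E_j)_{yu}(E_h)_{zu}=0$ for all $x,y,z\in\Omega$; note that this last sum is exactly the inner product $\langle (E_i)_{x\cdot}\circ(E_j)_{y\cdot},\,(E_h)_{z\cdot}\rangle$ of the Schur product of the $x$-th row of $E_i$ with the $y$-th row of $E_j$ against the $z$-th row of $E_h$.

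For the forward implication, note that because $E_\ell$ is symmetric and idempotent, each of its rows lies in the eigenspace $V_\ell=\operatorname{im}E_\ell$, and the orthogonal projection onto $V_h$ is the map $E_h$ itself. Hence, if $q_{ij}^{h}=0$, then Theorem~\ref{thm:KreinProjection} applied to $(E_i)_{x\cdot}\in V_i$ and $(E_j)_{y\cdot}\in V_j$ gives $E_h\bigl((E_i)_{x\cdot}\circ(E_j)_{y\cdot}\bigr)=0$, i.e.\ $(E_i)_{x\cdot}\circ(E_j)_{y\cdot}$ is orthogonal to $V_h$ and in particular to $(E_h)_{z\cdot}$. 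So the inner product above vanishes for every $x,y,z$, and therefore so does the original sum.

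For the converse, recall $E_i\circ E_j=\tfrac{1}{|\Omega|}\sum_{\ell}q_{ij}^{\ell}E_\ell$ and pair both sides with $E_h$ under the trace form $\langle X,Y\rangle=\sum_{a,b}X_{ab}Y_{ab}$; since $\langle E_\ell,E_h\rangle=\delta_{\ell h}\operatorname{rank}E_h$, this yields $q_{ij}^{h}=\tfrac{|\Omega|}{\operatorname{rank}E_h}\sum_{a,b}(E_i)_{ab}(E_j)_{ab}(E_h)_{ab}$. Now specialise the vanishing hypothesis to $x=y=z=a$ and sum over $a\in\Omega$: the result is precisely $\sum_{a,b}(E_i)_{ab}(E_j)_{ab}(E_h)_{ab}=0$, hence $q_{ij}^{h}=0$.

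All the mathematical content sits in the short forward implication via Theorem~\ref{thm:KreinProjection}; the remaining points are routine: that the rows of a symmetric idempotent span its image, that ``trivial projection onto $V_h$'' is the same as ``orthogonal to $V_h$'', and that the normalisation and transpose conventions for the dual eigenvalue matrix $Q$ are applied consistently. The one computation needing a little care is the rearrangement in the first display, but it is mechanical once $p_{r,s,t}^{x,y,z}$ is written as a sum of products of adjacency-matrix entries.
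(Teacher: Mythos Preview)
The paper does not supply its own proof of Theorem~\ref{KreinAndTripleIntersection}; it is quoted from \cite{CoolsaetJurisic2008} as background. So there is nothing in the paper to compare your argument against line by line.

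That said, your proof is correct and is essentially the standard one. The key identity
\[
\sum_{r,s,t=0}^{d}Q_{ri}Q_{sj}Q_{th}\,p_{r,s,t}^{x,y,z}=|\Omega|^{3}\sum_{u\in\Omega}(E_i)_{xu}(E_j)_{yu}(E_h)_{zu}
\]
is exactly how Coolsaet and Juri\v{s}i\'{c} set things up, and your forward implication via Theorem~\ref{thm:KreinProjection} (rows of $E_\ell$ lie in $V_\ell$, so their Schur product has trivial $V_h$-component) is the clean way to do it. For the converse you only use the diagonal specialisation $x=y=z$, which is precisely the content of \cite[Theorem~3]{CoolsaetJurisic2008}; this is enough, since $q_{ij}^h$ is a fixed multiple of $\sum_{a,b}(E_i)_{ab}(E_j)_{ab}(E_h)_{ab}$. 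One small remark: the paper writes $p_{r,s,t}^{x,y,z}$ using graph distance, but in the general association-scheme setting of the theorem this should be read (as you implicitly do) with $d(x,u)=r$ meaning $(x,u)\in\mathcal{R}_r$.
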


Theorem \ref{KreinAndTripleIntersection} has been used to show that distance-regular graphs with certain intersection arrays do not exist \cite{CoolsaetJurisic2008, JurisicVidali2012, Urlep2012, Vidali2018}.

We now reach the main theorem of this paper. Theorem \ref{MyKreinTheorem} shows, that given certain vanishing Krein parameters, Delsarte designs must 
be constrained to a smaller subset of the eigenspaces in which they lie, or else consist of exactly half of the vertices.

\begin{theorem}\label{MyKreinTheorem}
Let $(\Omega, \mathcal{R})$ be an association scheme, and let $\theta$ be a subset of the vertices, such that its characteristic vector $\chi_\theta$
satisfies
\[
\chi_\theta \in V_0 \perp \left(\bigperp_{\ell \in S} V_\ell\right),
\]
where $S \subseteq \{1, \ldots, d \}$.
If there is some $h \in S$, such that $q_{i,j}^h=0$ for all $i, j \in S$, then 
\[
\chi_\theta \in V_0 \perp \left(\bigperp_{\ell \in S \backslash \{h\}} V_\ell\right),
\]
or $|\theta|=\frac{1}{2}|\Omega|$.
\end{theorem}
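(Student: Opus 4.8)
The plan is to exploit the fact that $\chi_\theta$ is a $0/1$-vector, so $\chi_\theta\circ\chi_\theta=\chi_\theta$, and to read off the consequences of this identity after projecting onto $V_h$.

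First I would write the spectral decomposition $\chi_\theta=\chi_0+\sum_{\ell\in S}\chi_\ell$, where $\chi_\ell$ denotes the orthogonal projection of $\chi_\theta$ onto $V_\ell$. Since $V_0$ is spanned by the all-ones vector $\allones$, the component $\chi_0$ equals $\frac{|\theta|}{|\Omega|}\allones$. Expanding $\chi_\theta\circ\chi_\theta$ bilinearly and using that $\allones$ is the identity for the Schur product, one obtains
\[
\chi_\theta \;=\; \chi_0\circ\chi_0 \;+\; 2\sum_{i\in S}\tfrac{|\theta|}{|\Omega|}\,\chi_i \;+\; \sum_{i,j\in S}\chi_i\circ\chi_j .
\]

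Next I would project both sides onto $V_h$. On the left this yields $\chi_h$. On the right, $\chi_0\circ\chi_0$ is a scalar multiple of $\allones$, hence lies in $V_0$ and projects to $0$ because $h\neq 0$; the term $2\sum_{i\in S}\tfrac{|\theta|}{|\Omega|}\chi_i$ projects to $2\tfrac{|\theta|}{|\Omega|}\chi_h$, since $\chi_i\in V_i$ contributes nothing to $V_h$ unless $i=h$, and here we use that $h\in S$; and each $\chi_i\circ\chi_j$ with $i,j\in S$ projects trivially to $V_h$ by Theorem \ref{thm:KreinProjection}, because $q_{ij}^h=0$. Equating the $V_h$-components gives $\chi_h=2\tfrac{|\theta|}{|\Omega|}\chi_h$, so either $\chi_h=0$, which places $\chi_\theta$ in $V_0\perp\bigperp_{\ell\in S\setminus\{h\}}V_\ell$, or $1=2\tfrac{|\theta|}{|\Omega|}$, i.e.\ $|\theta|=\tfrac12|\Omega|$.

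There is no real obstacle here: the argument is a one-line application of the Cameron--Goethals--Seidel projection result once the Schur square is expanded. The only point requiring minor care is the bookkeeping of the cross terms — getting the factor of $2$ right and, crucially, noting that the hypothesis $h\in S$ is what makes the $\chi_0\circ\chi_h$ term present, so that the coefficient identity $1=2\tfrac{|\theta|}{|\Omega|}$ genuinely arises in the case $\chi_h\neq 0$.
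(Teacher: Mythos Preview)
Your proof is correct and follows essentially the same approach as the paper: decompose $\chi_\theta$ into eigenspace components, exploit the Schur-idempotence $\chi_\theta\circ\chi_\theta=\chi_\theta$, and project onto $V_h$ using Theorem~\ref{thm:KreinProjection} to kill the $\chi_i\circ\chi_j$ terms. The only cosmetic difference is that you identify the $V_0$-coefficient as $\tfrac{|\theta|}{|\Omega|}$ from the outset, whereas the paper calls it $\lambda$ and computes $|\theta|=\lambda|\Omega|$ at the end.
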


The proof of this result will appear in Section \ref{section:proof}. 

\begin{xmpl}
Consider the Johnson scheme $J(8,4)$ on the $4$-subsets $\Omega$ of $\{1,\ldots,8\}$. 
We will order the simultaneous eigenspaces in the natural cometric ordering.
We have $q_{ij}^1=0$ for all $i,j\in\{1,4\}$. Suppose $\theta$ is a subset of $\Omega$ such that 
$\chi_\theta\in V_0\perp V_1\perp V_4$. Then by Theorem \ref{MyKreinTheorem}, 
(i) $\chi_\theta\in V_0\perp V_4$, or (ii) 
$\theta$ consists of half the elements of $\Omega$.
It is not difficult to undertake a complete enumeration of these examples. It turns out that $\theta$ is one of four examples up to symmetry:
\begin{enumerate}[(a)]
\item A $3-(8,4,1)$ design equivalent to the design arising from the points and planes of  the affine space $\mathsf{AG}(3,2)$. In this case $\chi_\theta\in V_0\perp V_4$ and $|\theta| = 14$.
\item The unique flag-transitive $3-(8,4,3)$ design.  In this case $\chi_\theta\in V_0\perp V_4$ and $|\theta| = 42$.
\item All of the 4-subsets containing one element. In this case $\chi_\theta\in V_0\perp V_1$ and $|\theta| = 35 = \frac{|\Omega|}{2}$.
\item A set\footnote{Consider $\PSL(2,5)$ acting transitively on 6 elements (i.e., the natural action on the projective line $\PG(1,5)$). Then $\PSL(2,5)$ has two orbits $\mathcal{O}$ and $\mathcal{O}'$ of size 10 on 3-subsets.
Adjoin $7$ to each $S$ in $\mathcal{O}$
and likewise, adjoin 8 to every element of $\mathcal{O}'$ to produce a total of twenty 4-subsets $\Lambda$ of $\{1,\ldots,8\}$. Now, take the set $\Delta$ of every 4-subset of $\{1,2,3,4,5,6\}$ containing $\{1,2\}$. Then $|\Delta\cup \Lambda|=15+20=35$ and has characteristic vector lying in $V_0\perp V_1\perp V_4$.}
 of half the elements of $\Omega$, with stabiliser $S_5$ in $S_8$. In this case $\chi_\theta\in V_0\perp V_1 \perp V_4$ and $|\theta| = 35 = \frac{|\Omega|}{2}$.
\end{enumerate}
\end{xmpl}

For $T \subseteq \{1, \ldots, d\}$, a \emph{(Delsarte) $T$-design} of $(\Omega, \mathcal{R})$ is a subset $\theta \in \Omega$ such that $\chi_\theta E_i = 0$ for all $i \in T$. Reformulated in the language of $T$-designs, Theorem \ref{MyKreinTheorem} states that for $S = \{1, \ldots, d\} \backslash T$ if $q_{i,j}^h$ vanishes for some $h \in S$ and all $i, j \in S$ then either $|\theta| = |\Omega|/2$ or $\theta$ is a $(T \cup \{h\})$-design.

In order to use Theorem \ref{MyKreinTheorem} we require at a minimum that $q_{hh}^h=0$. 
Moreover, the smallest such setting is where $S=\{h\}$. 
A subset $\theta \in \Omega$ is called an \emph{intriguing set of type $h$} (cf. \cite{BambergMetsch2019}) if $\chi_\theta \in V_0 \perp V_h$.
Hence an intriguing set of type $i$ is a $\{1, \ldots, d \} \backslash \{h\}$-design.
Intriguing sets are often known by other names in different settings. For example, \emph{Cameron-Liebler line classes} (cf. \cite{CameronLiebler1982,DeBoeckRodgersStormeSvob2019,DeBoeckStormeSvob2016,Metsch2017, RodgersStormeVansweevelt2018}) and \emph{Boolean degree 1 functions} (cf. \cite{FilmusIhringer2019}) are all variations of intriguing sets. 
We have the following special case of Theorem \ref{MyKreinTheorem} for intriguing sets of type $h$.

\begin{corollary}\label{cor:KreinIntriguing}
If $q_{hh}^h= 0$, then a nontrivial intriguing set of type $h$ contains exactly half of the vertex set.
\end{corollary}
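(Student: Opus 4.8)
The plan is to specialise Theorem \ref{MyKreinTheorem} to the minimal case $S = \{h\}$ and then discard the degenerate alternative. By definition, a nontrivial intriguing set $\theta$ of type $h$ has $\chi_\theta \in V_0 \perp V_h$, which is precisely the hypothesis of Theorem \ref{MyKreinTheorem} with $S = \{h\}$. Since $S$ is a singleton, the only pair $(i,j)$ with $i,j \in S$ is $(h,h)$, so the single assumption $q_{hh}^h = 0$ supplies all the required vanishing Krein parameters. Theorem \ref{MyKreinTheorem} then gives that either $\chi_\theta \in V_0 \perp \left(\bigperp_{\ell \in \varnothing} V_\ell\right) = V_0$, or $|\theta| = \tfrac{1}{2}|\Omega|$.

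It remains only to identify the first alternative with triviality. The space $V_0 = \langle \allones \rangle$ is one-dimensional, and the only $\{0,1\}$-valued vectors it contains are $\mathbf{0}$ and $\allones$; these correspond to $\theta = \varnothing$ and $\theta = \Omega$ respectively. Thus if $\theta$ is nontrivial the first alternative is impossible, and we are left with $|\theta| = \tfrac{1}{2}|\Omega|$, as claimed.

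For completeness I would also record the short self-contained proof, which is just Theorem \ref{MyKreinTheorem}'s argument in miniature: write $\chi_\theta = \tfrac{|\theta|}{|\Omega|}\allones + v$ with $v \in V_h$, use that $\chi_\theta \circ \chi_\theta = \chi_\theta$ since $\chi_\theta$ is a $\{0,1\}$-vector, expand the Schur square, and apply Theorem \ref{thm:KreinProjection} with $i = j = h$ to see that $v \circ v$ projects trivially onto $V_h$. Equating $V_h$-components of $\chi_\theta$ on both sides of the expansion yields $v = \tfrac{2|\theta|}{|\Omega|}\, v$, whence $v = 0$ (the trivial case) or $|\theta| = \tfrac{1}{2}|\Omega|$.

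I do not anticipate any genuine obstacle, as the statement is an immediate corollary of the two theorems already available; the only points requiring a word of care are the interpretation of ``nontrivial'' and the elementary fact that the all-ones eigenspace contains no non-constant $\{0,1\}$-vector.
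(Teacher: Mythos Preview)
Your proof is correct and is exactly the intended specialisation: the paper presents Corollary~\ref{cor:KreinIntriguing} as the case $S=\{h\}$ of Theorem~\ref{MyKreinTheorem} without further argument, and your observation that $\chi_\theta\in V_0$ forces $\theta$ trivial is the only detail one needs to supply.
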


Corollary \ref{cor:KreinIntriguing} yields alternative proofs of three results in the literature: (i) a nontrivial $m$-ovoid of a generalised
quadrangle of order $(s,s^2)$ is a hemisystem  \cite{CameronGoethalsSeidel1978_Strongly} (see also Corollary \ref{CameronGoethalsSeidel1978_Strongly});
(ii) a nontrivial $m$-ovoid of a dual polar space of the form $\DQ(2d, q)$, $\DW(2d-1,q)$, $\DH(2d-1,q^2)$ (for $d \ge 3$) is a hemisystem
\cite[Theorem 1.1]{Bamberg:2018aa} (see also Theorem \ref{DQDWDH}); (iii)
 a nontrivial relative $m$-ovoid of a generalised quadrangle of order $(q, q^2)$, containing a doubly subtended quadrangle of order $(q,q)$, 
 is a relative hemisystem \cite{BambergLee2017} (see also Theorem \ref{bamberglee}).

\begin{xmpl}
The Clebsch graph, the Schl\"afli graph, and the complement of the Higman-Sims graph, are all strongly regular graphs, with parameters $(16,10, 6,6)$, $(27, 16,10, 8)$, and $(100, 77, 60, 56)$, respectively. In each of these graphs, $q_{11}^1=0$, hence by Corollary \ref{cor:KreinIntriguing}, intriguing sets of type $1$ in these graphs consist of half of the vertices.
\end{xmpl}

\section{Some background and notational conventions}

We remind the reader that we can calculate Krein parameters of an association scheme $(\Omega, \mathcal{R})$ from the matrix of eigenvalues $P$, or
the matrix of dual eigenvalues $Q$.

\begin{lemma}[cf. {\cite[Theorem 2.3.2]{Brouwer:1989aa}}]\label{calculating_krein}
In an association scheme $(\Omega, \mathcal{R})$, let
the $k_\ell$ be the valencies, and let the $m_h$ be the multiplicities (of the simultaneous eigenspaces). Then
\[
q_{ij}^h=\frac{1}{|\Omega|m_h}\sum_{\ell=0}^d k_\ell Q_{\ell i}Q_{\ell j}Q_{\ell h}=\frac{m_im_j}{|\Omega|}\sum_{\ell=0}^d\frac{P_{i\ell}P_{j\ell}P_{h\ell}}{k_\ell^2}.
\]
\end{lemma}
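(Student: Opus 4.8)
The plan is to derive both expressions for $q_{ij}^h$ directly from the definition of the Krein parameters as the structure constants of the Bose--Mesner algebra under the Schur (Hadamard) product, extracting the individual coefficients by taking traces. Recall the two dual bases of the Bose--Mesner algebra and their multiplicative behaviour: the adjacency matrices $A_0=I,A_1,\ldots,A_d$ are $0$--$1$ matrices with pairwise disjoint support, so $A_\ell\circ A_m=\delta_{\ell m}A_\ell$, while the primitive idempotents $E_0,\ldots,E_d$ satisfy $E_iE_j=\delta_{ij}E_i$ and $\operatorname{tr}(E_h)=m_h$. The two bases are related by the eigenvalue and dual eigenvalue matrices via $A_\ell=\sum_{i=0}^d P_{i\ell}E_i$ and $E_i=\frac1{|\Omega|}\sum_{\ell=0}^d Q_{\ell i}A_\ell$, and the Krein parameters are defined by $E_i\circ E_j=\frac1{|\Omega|}\sum_{h=0}^d q_{ij}^h E_h$.

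First I would expand $E_i\circ E_j$ in the adjacency basis: substituting the expressions for $E_i$ and $E_j$ and using $A_\ell\circ A_m=\delta_{\ell m}A_\ell$ collapses the resulting double sum to $E_i\circ E_j=\frac1{|\Omega|^2}\sum_\ell Q_{\ell i}Q_{\ell j}A_\ell$. To isolate $q_{ij}^h$, I would multiply both expressions for $E_i\circ E_j$ by $E_h$ and take the trace. On the idempotent side, $\operatorname{tr}\big(E_h(E_i\circ E_j)\big)=\frac1{|\Omega|}\sum_{h'}q_{ij}^{h'}\operatorname{tr}(E_hE_{h'})=\frac{m_h}{|\Omega|}q_{ij}^h$; on the adjacency side, using $\operatorname{tr}(E_hA_\ell)=\operatorname{tr}\big(E_h\sum_{i'}P_{i'\ell}E_{i'}\big)=m_hP_{h\ell}$, the same trace equals $\frac{m_h}{|\Omega|^2}\sum_\ell P_{h\ell}Q_{\ell i}Q_{\ell j}$. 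Equating these and cancelling $m_h/|\Omega|$ gives the intermediate identity $q_{ij}^h=\frac1{|\Omega|}\sum_\ell P_{h\ell}Q_{\ell i}Q_{\ell j}$.

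Finally I would pass between the two displayed forms using the standard duality relation $m_iP_{i\ell}=k_\ell Q_{\ell i}$, which itself follows by comparing $\operatorname{tr}(A_\ell E_i)=P_{i\ell}m_i$ with $\operatorname{tr}(A_\ell E_i)=\frac1{|\Omega|}\sum_m Q_{mi}\operatorname{tr}(A_\ell A_m)=k_\ell Q_{\ell i}$ (or, equivalently, from $PQ=QP=|\Omega|I$ together with the diagonal matrices of valencies and multiplicities). Substituting $P_{h\ell}=k_\ell Q_{\ell h}/m_h$ into the intermediate identity yields the first expression of the lemma, while substituting $Q_{\ell i}=m_iP_{i\ell}/k_\ell$ and $Q_{\ell j}=m_jP_{j\ell}/k_\ell$ yields the second. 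I do not anticipate any genuine obstacle here: the only care needed is bookkeeping---keeping the row/column conventions for $P$ and $Q$ consistent with the statement and invoking the duality relation in the correct direction---and everything else is a routine manipulation of finite sums.
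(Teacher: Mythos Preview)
The paper does not supply its own proof of this lemma; it is stated as a background fact with a reference to \cite[Theorem 2.3.2]{Brouwer:1989aa}. Your argument is correct and is essentially the standard derivation one finds in that reference: expand $E_i\circ E_j$ in the $A_\ell$ basis using the Schur-orthogonality of the adjacency matrices, extract the $E_h$-coefficient via a trace, and then pass between the $P$- and $Q$-forms using $m_iP_{i\ell}=k_\ell Q_{\ell i}$. There is nothing to add beyond the bookkeeping caveat you already flagged.
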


There is a bijection between the power-set of the vertices and the set of all $\{0,1\}$-vectors by taking the characteristic vector of a set.  We write $\chi_S$ for the characteristic vector of a subset $S$ of a domain that is clear from the context. It is clear that Schur multiplication of two $\{0,1\}$-vectors produces another $\{0,1\}$-vector with a one in an entry if and only if the corresponding entry is one in both of the original vectors. Thus intersection of two vertex subsets is described by Schur multiplication of their characteristic vectors. That is, $\chi_{S_1 \cap S_2} = \chi_{S_1} \circ \chi_{S_2}$
for all subsets $S_1, S_2$ of our domain. Moreover, the all-ones vector $\allones$ is the identity under Schur multiplication, and $\allones - \chi_S = \chi_{S^c}$, where $S^c$ is the complement of $S$ in our domain. 

We will need one more item of notation. Recall that the eigenspaces of the association scheme in Theorem \ref{MyKreinTheorem} are denoted $V_i$.
Each $V_i$ has a linear projection map $E_i$; a \emph{minimal idempotent} for the association scheme. Recall that the $E_i$ have the following property:
$E_i E_j=\delta_{i,j} E_i$. So different minimal idempotents are orthogonal.

A \emph{partial linear space} is a (nonempty) incidence geometry of points and lines such that any two distinct points are contained in at most one line, 
and every line contains at least two points. An \emph{$m$-ovoid} of a partial linear space is a subset $S$ of the points such that every line is incident with
$m$ elements of $S$.
If every line has $s+1$ points incident with it, then an $\frac{s+1}{2}$-ovoid is a \emph{hemisystem}. The \emph{trivial} $m$-ovoids are the empty set and the whole point set, that is, $m$ is $0$ or $s+1$.

\section{Proof of the main result}\label{section:proof}

\begin{proof}[Proof of Theorem \ref{MyKreinTheorem}]
Let $v = \chi_\theta$. Then $v \in V_0 \perp \left(\bigperp_{\ell \in S} V_\ell\right)$, and hence $v = \lambda \allones + \sum_{\ell \in S} v_\ell$, where $v_\ell \in V_\ell$ and $v_\ell E_h = 0$ for all $\ell \neq h$. 
If $v_h = 0$, then $ vE_h  = 0$ and so
\[
\chi_\theta \in V_0 \perp \left(\bigperp_{\ell \in S \backslash \{h\}} V_\ell\right).
\]
Instead, take $v_h \neq 0$.
Now, 
\[
v \circ v = \lambda^2 \allones + \sum_{i \in S} 2\lambda v_i + \sum_{i,j \in S}v_i \circ v_j,
\]
since $\allones$ is the identity under Schur multiplication. Thus $(v \circ v)E_h = 2\lambda v_h$, since $ v_\ell E_h = 0$ for all $\ell \neq h$, and $(v_i \circ v_j) E_h = 0$ for all $i, j \in S$, by Theorem \ref{thm:KreinProjection} (as $q_{ij}^h = 0$ for all $i,j\in S$). However, note that $v \circ v = v = \lambda \allones + \sum_{\ell \in S} v_\ell$, since $v $ is a $\{0,1\}$-vector. Thus $  (v \circ v)E_h = v_h$.
Equating these expressions yields $2\lambda v_h = v_h$, and since $v_h$ is nontrivial, $\lambda = \frac{1}{2}$.
Therefore, 
\[
|\theta| = \allones \cdot v =\lambda(\allones \cdot \allones) + \sum_{\ell \in S} (\allones \cdot v_\ell)=\lambda(\allones \cdot \allones) = \tfrac{1}{2}|\Omega|.\qedhere
\]
\end{proof}

\section{Vanishing Krein parameters in cometric, $Q$-bipartite and $Q$-antipodal schemes}

A scheme is \emph{cometric} (or \emph{Q-polynomial}) with respect to the ordering $\{E_i\}_{i=0}^d$, if
\begin{enumerate}[noitemsep,topsep=0pt]
	\item $q_{ij}^h = 0$ if $i + j < h$ or $0 \le h < |i-j|$,
	\item $q_{ij}^{i+j} \neq 0$ for all $i$ and $j$ such that $i + j \le d$.
\end{enumerate}

Thus such schemes have a large proportion of vanishing Krein parameters. Indeed, Terwilliger \cite{Terwilliger1992} was interested in cometric schemes for this reason.
A scheme is called \emph{metric} if the equivalent conditions hold when $q_{ij}^h$ is replaced by $p_{ij}^h$.
 Many of the classical association schemes are both metric and cometric. Schemes which are cometric but not metric are relatively rare, hence the interest in the \emph{Penttila-Williford scheme}. We consider this scheme further in Section \ref{PenttilaWillifordScheme}. Moreover, most association schemes arising from geometries are cometric, such as the dual polar spaces, for example.

\begin{corollary}\label{cor:cometric}
Let $(\Omega, \mathcal{R})$ be a cometric association scheme, and let $\theta$ be a subset of the vertices, such that
\[
\chi_\theta \in V_0 \perp \left(\bigperp_{i \in S} V_i \right)\perp V_h,
\]
where $S \subseteq \{1, \ldots, \lceil \frac{h}{2} \rceil -1 \}$ for some $h$.
If $q_{hh}^h=0$ then 
\[
\chi_\theta \in V_0 \perp \left(\bigperp _{i \in S} V_i\right),
\]
or $|\theta|=\frac{1}{2}|\Omega|$.
\end{corollary}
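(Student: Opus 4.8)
The plan is to deduce this from Theorem~\ref{MyKreinTheorem}. Set $S':=S\cup\{h\}$; the assumed containment $\chi_\theta\in V_0\perp\left(\bigperp_{i\in S}V_i\right)\perp V_h$ says precisely that $\chi_\theta\in V_0\perp\left(\bigperp_{\ell\in S'}V_\ell\right)$, and since $S'\setminus\{h\}=S$, applying Theorem~\ref{MyKreinTheorem} to the set $S'$ with distinguished index $h$ yields exactly the stated dichotomy: either $\chi_\theta\in V_0\perp\left(\bigperp_{i\in S}V_i\right)$ or $|\theta|=\tfrac12|\Omega|$. So the whole task reduces to verifying the hypothesis of Theorem~\ref{MyKreinTheorem}, namely $q_{i,j}^h=0$ for all $i,j\in S'$.

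I would split this verification into three cases according to how $i,j$ lie in $S'=S\cup\{h\}$. If $i,j\in S$, then because $S\subseteq\{1,\dots,\lceil h/2\rceil-1\}$ we have $i+j\le 2\lceil h/2\rceil-2\le h-1<h$, so $q_{i,j}^h=0$ by the first cometric vanishing condition; note that the bound on $S$ is exactly the largest one guaranteeing $i+j<h$ throughout $S$. If $i=j=h$, then $q_{h,h}^h=0$ is the hypothesis. (When $S=\emptyset$ these are the only cases and the reduction is immediate, recovering Corollary~\ref{cor:KreinIntriguing}.) The remaining, and most delicate, case is a mixed pair $\{i,h\}$ with $i\in S$, where one must show $q_{i,h}^h=0$: here the elementary cometric conditions give nothing, since $i<h$ forces $|i-h|=h-i\le h\le h+i$, so neither ``$i+h<h$'' nor ``$h<|i-h|$'' holds, and $q_{i,h}^h$ need not vanish in an arbitrary cometric scheme. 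This mixed case is the step I expect to be the main obstacle.

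To settle it I would go beyond the two defining conditions of cometricity and use the finer $Q$-polynomial structure --- the three-term form of $E_1\circ E_\ell$ and the way it should propagate the vanishing $q_{h,h}^h=0$ downward under the inequality $2i<h$ coming from the bound on $S$ --- to force the parameters $q_{i,h}^h$ to be zero. Failing a clean statement at that generality, I would revisit the computation of Section~\ref{section:proof} directly: writing $v=\chi_\theta=\lambda\allones+\sum_{i\in S}v_i+v_h$ with $v\circ v=v$, project not only onto $V_h$ but onto the whole band $V_{h-\max S},\dots,V_{h+\max S}$, and exploit that $v$ has no components in the intermediate eigenspaces $V_{\lceil h/2\rceil},\dots,V_{h-1}$ to make the offending contributions $(v_i\circ v_h)E_h$ cancel, thereby recovering $2\lambda v_h=v_h$ and hence $\lambda=\tfrac12$ whenever $v_h\neq 0$.
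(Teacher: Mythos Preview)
The paper gives no proof of Corollary~\ref{cor:cometric}; it is simply stated immediately after the definition of cometric schemes, with the evident intention that it be read as a direct specialisation of Theorem~\ref{MyKreinTheorem}. Your reduction is exactly that intended specialisation: set $S'=S\cup\{h\}$ and invoke Theorem~\ref{MyKreinTheorem}. Two of your three cases are handled cleanly --- for $i,j\in S$ the bound $i+j\le 2\lceil h/2\rceil-2<h$ together with the cometric vanishing rule gives $q_{ij}^h=0$, and $q_{hh}^h=0$ is hypothesised --- so up to that point your argument matches what the authors presumably had in mind.

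You are right to flag the mixed case $q_{ih}^h$ with $i\in S$ as the obstacle: neither cometric vanishing condition applies (since $0<i<h$ forces $|i-h|\le h\le i+h$), and the paper supplies no separate justification. This is a genuine gap in the corollary as written, not a deficiency in your reading. Your two proposed rescues, however, are not convincing. For the first, there is no standard $Q$-polynomial identity that propagates the single vanishing $q_{hh}^h=0$ down to $q_{ih}^h=0$ for all $i<\lceil h/2\rceil$; already in $Q$-bipartite schemes with $h$ odd one has $q_{hh}^h=0$ yet $q_{ih}^h$ need not vanish for even $i$. For the second, projecting $v\circ v=v$ onto the neighbouring eigenspaces $V_{h\pm m}$ does yield extra equations, but these involve the unknown pieces $(v_h\circ v_h)E_{h\pm m}$ as well as the cross terms $(v_i\circ v_h)E_{h\pm m}$, and there is no evident mechanism forcing the specific combination $\sum_{i\in S}(v_i\circ v_h)E_h$ to vanish so that $2\lambda v_h=v_h$ survives. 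In short: your diagnosis is sharper than the paper's, but neither your sketch nor the paper closes the mixed case, and the corollary appears to need the additional hypothesis $q_{ih}^h=0$ for $i\in S$ (which is automatic only when $S=\varnothing$, i.e.\ in Corollary~\ref{cor:KreinIntriguing}).
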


An association scheme is \emph{primitive} if all of its associate graphs, $\Gamma_i$, are connected. Otherwise it is called \emph{imprimitive}. 
The following conjecture, due to Bannai and Ito, if true, would imply that, for sufficiently large diameter, most of the Krein parameters of a primitive distance-regular graph vanish.

\begin{conjecture}[Bannai and Ito {\cite[p. 312]{BannaiIto1984}}]
For sufficiently large diameter, a primitive association scheme is metric if and only if it is cometric.
\end{conjecture}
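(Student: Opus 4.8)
This final statement is the (to my knowledge still open) Bannai--Ito conjecture, so what follows is necessarily an attack strategy rather than a completed argument; none of the results quoted above (which concern Delsarte designs inside a fixed scheme) come close to yielding it, and the conjecture is evidently included here as motivation. The natural framework is self-duality together with the theory of Leonard pairs: a scheme that is simultaneously metric and cometric is precisely one whose adjacency matrix $A=A_1$ and dual adjacency matrix act on the primary module of the subconstituent (Terwilliger) algebra as a Leonard pair, and Leonard's theorem classifies these by the $q$-Racah polynomials and their degenerations. The plan is to show that, for diameter $d$ sufficiently large, primitivity forces a metric scheme into this self-dual regime (hence cometric), and symmetrically forces a cometric scheme to be metric, so that outside finitely many small-$d$ exceptions the two families coincide.

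I would begin with the direction metric $\Rightarrow$ cometric. Assume $(\Omega,\mathcal{R})$ is primitive and metric of diameter $d$, so its Bose--Mesner algebra is generated by $A=A_1$ with $d+1$ distinct eigenvalues $\theta_0>\cdots>\theta_d$ and every $E_i$ is a polynomial in $A$. To certify cometricity I must exhibit an ordering of the idempotents $E_0,\dots,E_d$ whose Krein array is tridiagonal, that is, $q_{1j}^{h}=0$ whenever $|j-h|>1$ with non-vanishing on the super- and subdiagonal. By Lemma~\ref{calculating_krein} each $q_{ij}^{h}$ is an explicit rational expression in the $P_{i\ell}$ (equivalently in the $\theta_\ell$ and the multiplicities $m_\ell$), so the goal becomes to prove that, as $d\to\infty$, the vanishing pattern of these expressions is pinned into the tridiagonal shape. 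The hoped-for engine is eigenvalue rigidity: for large diameter the eigenvalues $\theta_\ell$ of a metric scheme are severely constrained (the same phenomenon that drives the bounded-valency rigidity theory for distance-regular graphs), and one would try to convert that rigidity into control of the dual Krein side, with the hypothesis ``$d$ sufficiently large'' eliminating the sporadic non-self-dual configurations such as the Penttila--Williford scheme.

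The crux, and the genuine obstacle, is the transfer of structure across the $P$/$Q$ duality: metricity is a tridiagonality statement about the action of $A$ on the standard module with respect to the distance partition, whereas cometricity is the dual tridiagonality of the dual adjacency matrix with respect to the \emph{eigenspace} partition, and there is no known mechanism forcing one from the other. Lemma~\ref{calculating_krein} relates the $p_{ij}^{h}$ and the $q_{ij}^{h}$, but only through the full eigenvalue--multiplicity data, not through any transform that preserves a banded shape. A complete proof therefore seems to require a new structural theorem bridging the metric and cometric three-term recurrences for large $d$---for instance, showing via Terwilliger's balanced-set or thin/dual-thin module conditions that the primary module must already be a Leonard-pair module---rather than any routine manipulation of the identities available here. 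Accordingly I would not expect to close this out: I flag the $P$-to-$Q$ transfer as the step where the difficulty is entirely concentrated, which is precisely why the assertion remains a conjecture.
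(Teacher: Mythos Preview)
You have correctly identified the situation: the statement is the Bannai--Ito conjecture, which is stated in the paper purely as motivation and is not proved there (nor anywhere else, to date). The paper offers no argument for it whatsoever---it appears as a \texttt{conjecture} environment with no accompanying proof---so there is nothing to compare your proposal against.

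Your write-up is appropriate in that it acknowledges the open status and sketches where the difficulty lies (the $P$/$Q$ duality transfer, Leonard pairs, Terwilliger-algebra machinery). That said, be aware that this is not a proof proposal in any meaningful sense: it is a survey of the landscape around the problem, and you are right to flag explicitly that the crux step has no known mechanism. If this were submitted as an attempted proof, it would be a non-proof for exactly the reason you yourself identify---but since the paper does not claim a proof either, your assessment that the conjecture is included as motivation is correct and sufficient.
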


There is no guarantee that $q_{ii}^i=0$ for some $i$, even in a cometric scheme.
However, we can say a bit more, particularly in the case of imprimitive cometric association schemes.
For a cometric association scheme, define $a_i^* = q_{1i}^i$, $b_i^* = q_{1, i+1}^i$, and $c_i^* = q_{1, i-1}^i$. The \emph{Krein array}, or \emph{dual intersection array}, is
\[
\{ b_0^*, b_1^*, \ldots, b_{d-1}^*; c_1^*, c_2^*, \ldots, c_d^*\}.
\]
Furthermore, $a_i^* + b_i^* + c_i^* = q_{ii}^0$.

A scheme is \emph{bipartite} if $p_{ij}^h = 0$ when $i+j+h$ is odd, and
antipodal if $b_j = c_{d-j}$ for all $j$ except possibly $j = \lfloor \frac{d}{2} \rfloor$.
Dually, a scheme is \emph{$Q$-bipartite} if $q_{ij}^h = 0$ when $i+j+h$ is odd, and $Q$-antipodal if $b_j^* = c_{d-j}^*$ for all $j$ except possibly $j = \lfloor \frac{d}{2} \rfloor$. Note that $a_i^*=0$ for all $i$ implies that a scheme is $Q$-bipartite.

\begin{xmpl}[{\cite[5.5.10]{Dam:2016aa}}]
For $d \ge 2$, a distance-regular graph is called \emph{almost $Q$-bipartite} if $a_i^*=0$ for $i < d$ and $a_d^*>0$. An almost $Q$-bipartite distance-regular graph is either the halved $(2d+1)$-cube, the folded $(2d+1)$-cube, or the collinearity graph of the dual polar space $\DH(2d-1, q^2)$. Since $a_1^*=q_{11}^1=0$, it follows from Corollary \ref{cor:KreinIntriguing} that an intriguing set of type $1$ consists of half the vertex set.
\end{xmpl}

An imprimitive metric association scheme other than a cycle is bipartite, or antipodal, or both {\cite[Theorem 4.2.1]{Brouwer:1989aa}}.
Suzuki \cite{Suzuki1998imprimitive} proved that an imprimitive cometric association scheme is $Q$-bipartite, or $Q$-antipodal, or both $Q$-bipartite and $Q$-antipodal, or has either four or six classes. Cerzo and Suzuki \cite{CerzoSuzuki2009} later showed that the exceptional four-class case does not exist, followed by a similar non-existence result by Tanaka and Tanaka for the exceptional six-class case \cite{TanakaTanaka2011}. Therefore, an imprimitive cometric association scheme other than a cycle is $Q$-bipartite, or $Q$-antipodal, or both.

The $Q$-bipartite condition is very strong, leading to the following result for constraining Delsarte designs.

\begin{theorem} \label{thm:QbipartiteKrein}
Let $(\Omega, \mathcal{R})$ be a $Q$-bipartite association scheme, and let $\theta$ be a nontrivial subset of the vertices, such that
\[
\chi_\theta \in V_0 \mathop{\perp}_{i \in S} V_i,
\]
where $S \subseteq \{1, \ldots, d \}$, such that $i$ is odd for all $i \in S$.
Then $|\theta|=\frac{1}{2}|\Omega|$.
\end{theorem}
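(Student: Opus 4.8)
The plan is to deduce Theorem~\ref{thm:QbipartiteKrein} from Theorem~\ref{MyKreinTheorem} by exhibiting, for a suitable choice of $h \in S$, that the relevant Krein parameters vanish. Since $(\Omega,\mathcal{R})$ is $Q$-bipartite, $q_{ij}^h = 0$ whenever $i+j+h$ is odd. If we take $h \in S$, then for all $i,j \in S$ we have that $i$, $j$ and $h$ are all odd, so $i+j+h$ is odd, and hence $q_{ij}^h = 0$. Theorem~\ref{MyKreinTheorem} (applied with this $S$ and this $h$) then gives that either $\chi_\theta \in V_0 \perp \left(\bigperp_{\ell \in S\setminus\{h\}} V_\ell\right)$ or $|\theta| = \frac{1}{2}|\Omega|$.

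In the first case we repeat the argument: $S \setminus \{h\}$ is again a set of odd indices, so for any $h' \in S\setminus\{h\}$ the $Q$-bipartite condition again forces $q_{ij}^{h'}=0$ for all $i,j \in S\setminus\{h\}$, and Theorem~\ref{MyKreinTheorem} peels off another eigenspace. Iterating, after finitely many steps we either reach the conclusion $|\theta| = \frac{1}{2}|\Omega|$ at some stage, or else we exhaust $S$ entirely and conclude $\chi_\theta \in V_0$, i.e.\ $\chi_\theta$ is a scalar multiple of $\allones$. But a $\{0,1\}$-vector lying in $V_0$ is either $\allones$ or the zero vector, so $\theta$ is trivial, contradicting the hypothesis that $\theta$ is nontrivial. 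Hence $|\theta| = \frac{1}{2}|\Omega|$.

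The only genuine content here is the bookkeeping of the induction; the main (minor) subtlety is making sure the hypothesis of Theorem~\ref{MyKreinTheorem} is legitimately re-applicable at each stage, which it is, because at every step the surviving index set remains a subset of $\{1,\dots,d\}$ consisting solely of odd indices, and the $Q$-bipartite vanishing condition $q_{ij}^h = 0$ for $i+j+h$ odd is unconditional. So I expect no real obstacle; this is essentially an immediate corollary of the main theorem together with the fact that $\sum$ of three odd numbers is odd. One could even phrase the whole thing without explicit induction by noting that the failure of $|\theta| = \frac{1}{2}|\Omega|$ would, via repeated application, force $\chi_\theta E_i = 0$ for every $i \in \{1,\dots,d\}$, hence $\chi_\theta \in V_0$.
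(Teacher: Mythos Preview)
Your proposal is correct and follows essentially the same approach as the paper's own proof: both observe that $i+j+h$ is odd for all $i,j,h\in S$, invoke the $Q$-bipartite condition to get $q_{ij}^h=0$, and then iteratively apply Theorem~\ref{MyKreinTheorem} to peel off eigenspaces until reaching the contradiction $\chi_\theta\in V_0$. The paper phrases this as a proof by contradiction (assume $|\theta|\ne\tfrac12|\Omega|$ from the outset), while you phrase it as a dichotomy at each step, but the argument is the same.
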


\begin{proof}
Since $i, j, h$ are odd for all $i, j, h \in S$, so too is $i+j+h$. Hence $q_{ij}^h=0$ for all $i, j, h \in S$, as $(\Omega, \mathcal{R})$ is $Q$-bipartite. Assume for a contradiction that $|\theta|\neq\frac{1}{2}|\Omega|$.
If we fix some $h \in S$, then 
\[
\chi_\theta \in V_0 \perp \left(\bigperp_{\ell \in S\backslash \{h\}} V_\ell\right),
\] by Theorem \ref{MyKreinTheorem}. Repeating this argument eventually constrains $\chi_\theta$ to lie in $V_0 = \langle \allones \rangle$, but this is a contradiction, since $\theta$ is a nontrivial proper subset of the vertices. Thus $|\theta|=\frac{1}{2}|\Omega|$.
\end{proof}

\begin{xmpl}
The \emph{Taylor graphs} are distance-regular graphs with intersection array 
\[
\{k, \mu, 1; 1 , \mu, k\}.
\]
 They are cometric with respect to two orderings, and are $Q$-bipartite (cf. \cite[p. 431]{Brouwer:1989aa}). Hence by Theorem \ref{thm:QbipartiteKrein}, intriguing sets of type $1$ and $3$ consist of half of the vertex set, as do $\{2\}$-designs.
\end{xmpl}

\section{Partial geometries and generalised quadrangles}

Partial geometries are a class of partial linear spaces having strongly regular collinearity graphs.
To be more precise, a \textit{partial geometry} $\mathsf{pg}(s,t,\alpha)$ is a partial linear space such that
every line contains $s+1$ points, every point is incident with $t+1$ lines and for a point
$P$ and line $\ell$ which are not incident, there are $\alpha$ points on $\ell$ collinear
with $P$. A partial geometry gives rise to a strongly regular graph with parameters
\[((s+1)(st+\alpha)/\alpha,s(t+1),s-1+t(\alpha-1),\alpha(t+1))\]
simply by considering the vertices to be the points, and adjacency to be collinearity of points.
By default, the association scheme arising from the partial geometry will be that arising from
the strongly regular collinearity graph, and the minimal idempotents will have the natural ordering\footnote{by  
their corresponding multiplicities $1$, $\frac{st (s+1)  (t+1)}{\alpha (s + t+1-\alpha)}$, $\frac{s(s+1-\alpha)  (s t+\alpha)}{\alpha (s + t+1-\alpha)}$.}.
A partial geometry is \emph{thick} if $s,t>1$. Bruck nets and generalised quadrangles occur naturally as partial geometries.
Indeed, a $\mathsf{pg}(s,t,\alpha)$ is a generalised quadrangle precisely when $\alpha=1$.

\begin{theorem}\label{thm:partal_geom_krein}
Let $\Gamma$ be a thick $\pg(s,t,\alpha)$. Then the Krein parameter $q_{ii}^i$ is zero if and only if $i=2$, 
\[
\alpha=\frac{s^2-t-1+\sqrt{t (t+1-s^2)}}{s - 1},
\]
 and $t\ge s^2$. Moreover, in the positive case, $\alpha=1$ if and only if $t=s^2$.
\end{theorem}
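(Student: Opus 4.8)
The plan is to compute $q_{22}^2$ explicitly for a thick $\pg(s,t,\alpha)$ using the second formula in Lemma \ref{calculating_krein}, namely $q_{ij}^h = \frac{m_im_j}{|\Omega|}\sum_{\ell} \frac{P_{i\ell}P_{j\ell}P_{h\ell}}{k_\ell^2}$, and then analyse when it vanishes. First I would record the standard data for the strongly regular collinearity graph: the eigenvalues $r = s-1$ and $s' = -t-1+\ldots$ (more precisely the two restricted eigenvalues $s-1-\alpha$ shifted appropriately — I would just take the well-known pair $\{s-1,\,-t-1\}$ adjusted, or cleaner, use $r,s$ with $r-s = \alpha + $ something), their multiplicities $f = m_1$, $g = m_2$, and the valency $k = s(t+1)$. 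From these the $3\times 3$ matrices $P$ and $Q$ are completely determined, so $q_{11}^1$, $q_{12}^1 = q_{21}^1$, $q_{22}^2$, etc. are all explicit rational functions of $s,t,\alpha$. I would also note that $q_{11}^1$ never vanishes for a thick partial geometry (this is presumably why only $i=2$ occurs), which can be checked directly or cited, so that the "only if $i=2$" half is immediate.

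Next, setting $q_{22}^2 = 0$: after clearing denominators this becomes a polynomial equation in $s,t,\alpha$. The key algebraic step is to solve it for $\alpha$. I expect the equation to be quadratic in $\alpha$ (the Krein parameter of a strongly regular graph is a cubic-type expression in the eigenvalues, and $\alpha$ enters the eigenvalues linearly), so the quadratic formula gives
\[
\alpha = \frac{s^2 - t - 1 \pm \sqrt{t(t+1-s^2)}}{s-1},
\]
and I would then argue that only the $+$ root is admissible. The admissibility argument uses the constraints $1 \le \alpha \le s+1$ and $1 \le \alpha \le t+1$ for a genuine partial geometry together with $s,t>1$: the $-$ root is too small (one checks it forces $\alpha < 1$, or equals $1$ only in a degenerate case already covered by the $+$ root). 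For the discriminant to be a nonnegative real we need $t(t+1-s^2)\ge 0$, i.e. $t\ge s^2-1$; I would then rule out $t = s^2-1$ by showing it makes $\alpha = (s^2-t-1)/(s-1) = 0$, which is not allowed, leaving $t \ge s^2$. Conversely, for any $t\ge s^2$ this value of $\alpha$ does make $q_{22}^2 = 0$ by construction, giving the "if" direction.

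Finally, the "moreover" clause: substitute the expression for $\alpha$ into $\alpha = 1$ and solve. This gives $s^2 - t - 1 + \sqrt{t(t+1-s^2)} = s-1$, i.e. $\sqrt{t(t+1-s^2)} = t + s - s^2$; squaring yields $t(t+1-s^2) = (t+s-s^2)^2$, which simplifies to a linear relation forcing $t = s^2$, and one checks the sign condition $t + s - s^2 \ge 0$ holds there. Conversely $t = s^2$ makes the square root vanish and $\alpha = (s^2 - s^2 - 1)/(s-1) = \ldots$ — here I would be careful: at $t=s^2$ the formula gives $\alpha = (s^2 - s^2 - 1 + 0)/(s-1) = -1/(s-1)$, which looks wrong, so in fact the correct reading is that one must re-examine the eigenvalue labelling or a sign; I would double-check the formula's derivation at this step. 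The main obstacle is precisely this bookkeeping: getting the eigenvalue/multiplicity conventions and the sign of the surd consistent so that the stated formula comes out exactly, and then cleanly justifying that the competing root and boundary cases $t \in \{s^2-1\}$ are excluded by the partial-geometry axioms rather than appearing as spurious solutions. The rest is routine algebra.
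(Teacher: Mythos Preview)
Your overall strategy is the same as the paper's: compute $q_{11}^1$ and $q_{22}^2$ from the $P$-matrix via Lemma~\ref{calculating_krein}, rule out $q_{11}^1=0$, solve the resulting quadratic in $\alpha$ for $q_{22}^2=0$, discard the inadmissible root, and handle the boundary case. Two points need correction.

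First, your worry in the ``moreover'' clause is an arithmetic slip, not a labelling issue. When $t=s^2$ you have $t+1-s^2=1$, not $0$; hence $\sqrt{t(t+1-s^2)}=\sqrt{s^2}=s$, and the formula gives
\[
\alpha=\frac{s^2-s^2-1+s}{s-1}=\frac{s-1}{s-1}=1,
\]
exactly as required. There is nothing to re-examine in the eigenvalue ordering here.

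Second, the step ``$q_{11}^1$ never vanishes for a thick partial geometry (this can be checked directly or cited)'' is the substantive half of the theorem and is not a one-line verification. The paper's argument is an honest inequality chase: from $q_{11}^1=0$ one derives
\[
s^2t^2\le (s+1-\alpha)\alpha^2\le (s+1-\alpha)(t+1)^2,
\]
which squeezes down to $s<(1+1/t)^2\le 9/4$, forcing $s=2$, then $t=2$, then $\alpha=1$; and a final direct evaluation gives $q_{11}^1=65/72\ne 0$. You should expect to carry out an argument of roughly this shape rather than defer it. Likewise, to discard the minus root the paper uses $\alpha>0$ (not $\alpha\ge 1$): since $t\ge s^2$ gives $s^2-t-1\le -1$ and $\sqrt{t(t+1-s^2)}\ge 1$, the minus root is negative.
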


\begin{proof}
Note that $1\le \alpha \le s+1,t+1$.
There are $(s+1)(st+\alpha)/\alpha$ points, and the matrix of eigenvalues for the collinearity graph (which is strongly regular) is:
\[
P=
\begin{bmatrix}
1&s(t+1)&\tfrac{st}{\alpha}(s+1-\alpha)\\
1&s-\alpha&\alpha-s-1\\
1&-t-1&t
\end{bmatrix}.
\]
Let us calculate the $q_{ii}^i$ (using Lemma \ref{calculating_krein}):

\begin{align*}
q_{11}^1&=1+\frac{(s-\alpha)^3}{(s(t+1))^2}+\frac{(\alpha-s-1)^3}{(\tfrac{st}{\alpha}(s+1-\alpha))^2},\\
q_{22}^2&=1+\frac{(-t-1)^3}{(s(t+1))^2}+\frac{t^3}{\left(\tfrac{st}{\alpha}(s+1-\alpha)\right)^2}.
\end{align*}

\begin{description}
\item[Case $q_{11}^1=0$.]
If $\alpha=s+1$, then
\[
q_{11}^1=1-\frac{1}{s^2(t+1)^2}
\]
which is clearly nonzero. So assume $\alpha<s+1$. Then we can cross-multiply and take the numerator:
\[
(s(t+1))^2\left(\tfrac{st}{\alpha}(s+1-\alpha)\right)^2+(s-\alpha)^3\left(\tfrac{st}{\alpha}(s+1-\alpha)\right)^2+(\alpha-s-1)^3(s(t+1))^2=0.
\]
Since $s,t,\alpha >0$,
\[
0=(s^2(t+1)^2+(s-\alpha)^3)t^2+(\alpha-s-1)(t+1)^2\alpha^2
\]
and hence
\[
(s^2(t+1)^2+(s-\alpha)^3)t^2=(s+1-\alpha)(t+1)^2\alpha^2.
\]
Now $0\le (s-\alpha)^3$ and so
\[
s^2(t+1)^2t^2\le (s+1-\alpha)(t+1)^2\alpha^2.
\]
Hence, 
\[
s^2t^2\le (s+1-\alpha)\alpha^2\le (s+1-\alpha)(t+1)^2
\]
since $\alpha \le t+1$. So
\[
s^2 \le (s+1-\alpha)(\tfrac{t+1}{t})^2 < s(\tfrac{t+1}{t})^2
\]
as $\alpha \ge 1$. Thus
\[
s < (\tfrac{t+1}{t})^2=\left(1+\frac{1}{t}\right)^2\le \left(\frac{3}{2}\right)^2=2+\frac{1}{4}.
\]
Since $s\ge 2$, we have $s=2$. When $t>2$, we have $(1+\tfrac{1}{t})^2<2$; a contradiction. Therefore, $t=2$ and $\alpha=1$ because $0< \alpha < s+1$. We can now substitute in our values for $s, t, \alpha$ to compute $q_{11}^1=65/72 \neq 0$.

\item[Case $q_{22}^2=0$.]

If $\alpha=s+1$, then the second relation is empty, so we assume $\alpha<s+1$. As before, we cross-multiply and take the numerator:
\[
(s(t+1))^2\left(\tfrac{st}{\alpha}(s+1-\alpha)\right)^2-(t+1)^3\left(\tfrac{st}{\alpha}(s+1-\alpha)\right)^2+t^3(s(t+1))^2=0.
\]
Since $s,t,\alpha >0$,
\[
0=s^2(s+1-\alpha)^2-(t+1)(s+1-\alpha)^2+t\alpha^2=(s^2-t-1)(s+1-\alpha)^2+t\alpha^2
\]
and hence
\[
\alpha = \frac{s^2-t-1\pm \sqrt{t (t+1-s^2)}}{s - 1}.
\]
Now for $\alpha$ to be a real number, it is necessary that $t+1\ge s^2$. If $t+1=s^2$, then $\alpha=0$ (a contradiction). Therefore, $t\ge s^2$.
Furthermore, $t \ge s^2$ implies that $\sqrt{t (t+1-s^2)} \ge \sqrt{t} \ge s \ge 1$ and $s^2 - t -1 \le -1$. Now, $\alpha > 0$ and so 
\begin{equation}\label{alpha}
\alpha = \frac{s^2-t-1 + \sqrt{t (t+1-s^2)}}{s - 1}.
\end{equation}

Moreover, from Equation \eqref{alpha}, $\alpha=1$ is equivalent to  $ t (t+1-s^2) = (s^2-t- s)^2$, which after rearrangement, yields
$t( s^2 - 2s + 1) = s^2(s^2 -2s +1)$; that is, $t=s^2$.\qedhere
\end{description}
\end{proof}

By Corollary \ref{cor:KreinIntriguing}, Theorem \ref{thm:partal_geom_krein} yields a result on hemisystems of partial geometries.

\begin{corollary}
Let $\Gamma$ be a $\pg\left(s,t,\frac{s^2-t-1+\sqrt{t (t+1-s^2)}}{s - 1}\right)$, where $s,t>1$. Then any nontrivial $m$-ovoid of $\Gamma$ is a hemisystem.
\end{corollary}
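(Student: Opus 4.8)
The plan is to derive the corollary as a direct consequence of Corollary \ref{cor:KreinIntriguing} together with Theorem \ref{thm:partal_geom_krein}. First I would recall that an $m$-ovoid of a partial linear space gives, via its characteristic vector, an intriguing set: if $S$ is an $m$-ovoid of $\pg(s,t,\alpha)$, then every line meets $S$ in exactly $m$ points, so $\chi_S$ is orthogonal to the eigenspace $V_1$ of the collinearity graph (this is the standard equivalence between $m$-ovoids and ``tight'' or Delsarte-type sets in the strongly regular graph; concretely $\chi_S A = (\text{constant})\allones + (\text{eigenvalue})\chi_S$ on the appropriate eigenspace, forcing $\chi_S \in V_0 \perp V_2$). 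Hence a nontrivial $m$-ovoid is precisely a nontrivial intriguing set of type $2$ in the collinearity scheme.

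Next I would invoke Theorem \ref{thm:partal_geom_krein}: for a thick partial geometry, $q_{22}^2 = 0$ holds exactly when $\alpha = \frac{s^2 - t - 1 + \sqrt{t(t+1-s^2)}}{s-1}$ and $t \ge s^2$. Since the corollary hypothesizes precisely this value of $\alpha$ (with $s,t>1$), the constraint $t \ge s^2$ is automatically forced — otherwise $\alpha$ would not even be real, or would fail to be positive, so there would be no such geometry in the first place; I would remark on this briefly to confirm the statement is not vacuous. With $q_{22}^2 = 0$ established, Corollary \ref{cor:KreinIntriguing} applies directly to intriguing sets of type $2$: any nontrivial such set contains exactly half the vertices.

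Finally I would translate ``half the vertices'' back into the language of ovoids. The number of points is $N = (s+1)(st+\alpha)/\alpha$, and a set $S$ meeting every line in $m$ points satisfies $|S| = mN/(s+1)$ by double-counting incident point–line pairs (each of the $N(t+1)/(s+1)$ lines contributes $m$ points, each point lies on $t+1$ lines). Setting $|S| = N/2$ gives $m = (s+1)/2$, which is exactly the definition of a hemisystem. So a nontrivial $m$-ovoid of such a $\pg$ is a hemisystem, as claimed.

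I do not expect a serious obstacle here — the content is entirely in Theorem \ref{thm:partal_geom_krein} and Corollary \ref{cor:KreinIntriguing}, which we may assume. The only point requiring a modicum of care is the first step: verifying that an $m$-ovoid really is an intriguing set of type $2$ (i.e.\ that it is $V_1$, not $V_2$, that vanishes) using the explicit matrix $P$ of eigenvalues already written down in the proof of Theorem \ref{thm:partal_geom_krein}; the row of $P$ indexed by the ``point'' relation shows that the collinearity-graph eigenvalue associated with $V_1$ is $s-\alpha$, and one checks the $m$-ovoid equation $\chi_S A_1 = m(t+1)\allones + (-1-t+\text{something})\chi_S$ picks out the eigenvalue $-t-1$ associated with $V_2$. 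This is routine. I would also mention that the empty set and full point set correspond to $m=0$ and $m=s+1$, matching the ``nontrivial'' hypothesis.
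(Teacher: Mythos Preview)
Your proposal is correct and follows exactly the paper's approach: the corollary is stated there as an immediate consequence of Theorem~\ref{thm:partal_geom_krein} (giving $q_{22}^2=0$) and Corollary~\ref{cor:KreinIntriguing} (forcing a nontrivial intriguing set of type~$2$ to be half the points). The paper leaves implicit the two routine facts you spell out---that an $m$-ovoid is an intriguing set of type~$2$ and that half the points means $m=(s+1)/2$---so your write-up simply fills in those details.
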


As a direct consequence of this result, we re-obtain a theorem by Cameron, Goethals and Seidel \cite{CameronGoethalsSeidel1978_Strongly}.

\begin{corollary} \label{CameronGoethalsSeidel1978_Strongly}
Let $\Gamma$ be a $\mathsf{GQ}(s,s^2)$, where $s>1$. Then any nontrivial $m$-ovoid of $\Gamma$ is a hemisystem.
\end{corollary}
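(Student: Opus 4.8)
The plan is to observe that Corollary~\ref{CameronGoethalsSeidel1978_Strongly} is simply the special case $\alpha = 1$ of the preceding corollary, so the whole task reduces to checking that a generalised quadrangle $\mathsf{GQ}(s,s^2)$ really is an instance of the family $\pg\!\left(s,t,\frac{s^2-t-1+\sqrt{t(t+1-s^2)}}{s-1}\right)$ with the right parameters. First I would recall that a $\mathsf{GQ}(s,t)$ is precisely a $\pg(s,t,\alpha)$ with $\alpha = 1$, as noted in the discussion of partial geometries above. Then, by the ``moreover'' clause of Theorem~\ref{thm:partal_geom_krein}, in the positive case $q_{22}^2 = 0$ we have $\alpha = 1$ if and only if $t = s^2$; so for a thick $\mathsf{GQ}(s,s^2)$ the hypothesis $\alpha = 1$ forces $t = s^2$, and conversely setting $t = s^2$ in Equation~\eqref{alpha} gives $\alpha = 1$, i.e.\ the parameter expression $\frac{s^2-t-1+\sqrt{t(t+1-s^2)}}{s-1}$ evaluates to $1$ when $t = s^2$ (one checks $\sqrt{s^2 \cdot 1} = s$, so the numerator is $s^2 - s^2 - 1 + s = s - 1$).

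With that identification in hand, the argument is immediate: a thick $\mathsf{GQ}(s,s^2)$ with $s > 1$ has $q_{22}^2 = 0$ by Theorem~\ref{thm:partal_geom_krein} (its parameters $s,t=s^2,\alpha=1$ satisfy exactly the stated positive case, since $t = s^2 \ge s^2$ and $\alpha$ has the prescribed form). An $m$-ovoid of the generalised quadrangle is, by definition, a subset $S$ of points meeting every line in $m$ points; a standard eigenvalue computation (or the characterisation of intriguing sets) shows that a nontrivial $m$-ovoid has characteristic vector in $V_0 \perp V_2$, i.e.\ it is an intriguing set of type $2$. Applying Corollary~\ref{cor:KreinIntriguing} with $h = 2$ then gives $|S| = \tfrac12|\Omega|$, and translating ``half the points'' back through the line-count shows $m = \tfrac{s+1}{2}$, which is exactly the definition of a hemisystem.

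Strictly speaking the cleanest write-up just invokes the immediately preceding corollary: substitute $t = s^2$ into $\pg\!\left(s,t,\frac{s^2-t-1+\sqrt{t(t+1-s^2)}}{s-1}\right)$, verify the third parameter collapses to $\alpha = 1$, note that a $\pg(s,s^2,1)$ is precisely a $\mathsf{GQ}(s,s^2)$, and conclude that every nontrivial $m$-ovoid is a hemisystem. So the only genuine content is the arithmetic identity $\frac{s^2 - s^2 - 1 + \sqrt{s^2(s^2 + 1 - s^2)}}{s-1} = \frac{s-1}{s-1} = 1$, which is what I would spell out.

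The main (and only) obstacle is really a bookkeeping one: making sure the translation between ``nontrivial $m$-ovoid'' and ``nontrivial intriguing set of type $2$'' is clean, and that ``contains exactly half the vertices'' is correctly converted into ``$m = \tfrac{s+1}{2}$'', i.e.\ a hemisystem. Both are routine — the first because the projection of $\chi_S$ onto the $V_1$-eigenspace vanishes exactly when every line meets $S$ in a constant number of points, which is the $m$-ovoid condition, and the second because double-counting incident point–line pairs against the known point and line totals of a $\mathsf{GQ}(s,s^2)$ pins down $m$ from $|S|$. There is no deep step here; the work was all done in Theorem~\ref{thm:partal_geom_krein} and Corollary~\ref{cor:KreinIntriguing}.
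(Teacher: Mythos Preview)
Your proposal is correct and takes essentially the same approach as the paper: the paper states this corollary as ``a direct consequence'' of the preceding corollary on $\pg\!\left(s,t,\frac{s^2-t-1+\sqrt{t(t+1-s^2)}}{s-1}\right)$, and your argument simply spells out the identification $t=s^2 \Rightarrow \alpha=1$ (via the ``moreover'' clause of Theorem~\ref{thm:partal_geom_krein}) together with the routine translations between $m$-ovoids, intriguing sets of type~$2$, and hemisystems.
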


For $\alpha = 2$, the only putative partial geometries with vanishing Krein parameters are $\pg(4, 27, 2)$, $\pg(5, 32, 2)$, and $\pg(7, 54, 2)$ \cite{Makhnev2002}. Makhnev \cite{Makhnev2002} showed that there is no partial geometry $\pg(5, 32, 2)$ while \"{O}sterg\aa rd and Soicher \cite{OstergardSoicher2018} showed that there is no partial geometry $\pg(4, 27, 2)$, leaving the only open case for $\alpha=2$ being $\pg(7, 54, 2)$.

The partial geometry $\pg(6, 80, 3)$ induces $\pg(5, 32, 2)$ in the neighbourhood of a vertex, and so there is no partial geometry  $\pg(6, 80, 3)$ \cite{Makhnev2002}. The smallest open case for a partial geometry with $\alpha=3$ and a vanishing Krein parameter is thus $\pg(7, 75, 3)$. For $\alpha \ge 4$, the existence problem of partial geometries with $q_{22}^2=0$ appears to be completely open.

\begin{problem}
Does there exist a $\pg(7, 54, 2)$ or $\pg(7, 75, 3)$?
\end{problem}

\begin{problem}
For which values of $\alpha \ge 4$ do partial geometries exist with $q_{22}^2=0$?
\end{problem}

\section{Dual polar spaces}

We present here an alternative proof of the main result of \cite{Bamberg:2018aa}. 

\begin{theorem}[{\cite[Theorem 1.1]{Bamberg:2018aa}}]\label{DQDWDH}
The only nontrivial $m$-ovoids that exist in $\DQ(2d, q)$, $\DW(2d-1,q)$ and $\DH(2d-1,q^2)$, for $d \ge 3$, are hemisystems
(i.e., $m = (q+1)/2$).
\end{theorem}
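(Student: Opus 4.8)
The plan is to derive the result from Corollary \ref{cor:KreinIntriguing}, so it suffices to exhibit a suitable vanishing Krein parameter in the association scheme attached to each of the three families of dual polar spaces. For $d \ge 3$, the collinearity graph of $\DQ(2d,q)$, $\DW(2d-1,q)$ and $\DH(2d-1,q^2)$ is a distance-regular graph of diameter $d$, and the corresponding scheme is cometric with respect to the natural ordering $\{E_i\}_{i=0}^d$. An $m$-ovoid of the dual polar space is, by definition, a subset of the points meeting every line (i.e.\ every maximal singular subspace once we dualise, so every point-line flag) in the prescribed number of points; standard theory (see e.g.\ \cite{Vanhove2011}) identifies a nontrivial $m$-ovoid of such a dual polar space with an intriguing set of type $d$, that is, $\chi_\theta \in V_0 \perp V_d$. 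Thus, by Corollary \ref{cor:KreinIntriguing}, it is enough to prove $q_{dd}^d = 0$ in each case; this immediately forces $|\theta| = \tfrac12|\Omega|$, which translates to $m = (q+1)/2$, i.e.\ a hemisystem.

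So the technical heart is the computation $q_{dd}^d = 0$. First I would record the Krein array of each of these three dual polar spaces (these are classical: the dual polar graphs $D_d(q)$, $C_d(q)$ and $^2A_{2d-1}(q)$ in the notation of \cite[Chapter 9]{Brouwer:1989aa}), and then use the second formula in Lemma \ref{calculating_krein},
\[
q_{dd}^d=\frac{m_d^2}{|\Omega|}\sum_{\ell=0}^d\frac{P_{d\ell}^3}{k_\ell^2},
\]
which requires only the last column of the eigenmatrix $P$ (equivalently the eigenvalues of the $d$-th distance matrix $A_d$) together with the valencies $k_\ell$. Alternatively, and perhaps more cleanly, I would argue via the dual Krein-array recurrence: for a cometric scheme with Krein array $\{b_0^*,\dots,b_{d-1}^*;c_1^*,\dots,c_d^*\}$ one has $q_{dd}^d = q_{dd}^0(a_d^* \text{-type expression})$, and there is a standard closed form for $q_{dd}^d$ in terms of the dual eigenvalues; checking that the relevant $q$-polynomial evaluated appropriately vanishes is then a finite rational-function identity in $q$. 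In fact the cleanest route may be to invoke the ``almost $Q$-bipartite'' observation already in the excerpt for $\DH(2d-1,q^2)$ (there $a_i^*=0$ for $i<d$, so in particular one needs a separate small computation for $q_{dd}^d$), and to handle $\DQ(2d,q)$ and $\DW(2d-1,q)$ — which have the same parameters as each other — by a single direct evaluation.

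Concretely, for the families $\DQ(2d,q)$ and $\DW(2d-1,q)$ I would substitute the known Krein array
\[
\Bigl\{\tfrac{(q^d-1)q}{q-1},\ \tfrac{(q^{d-1}-1)q^2}{q-1},\ \dots;\ 1,\ \tfrac{q^2-1}{q-1},\ \dots\Bigr\}
\]
(I will look up the precise form) into the recursion for the dual eigenvalues $Q_{\ell d}$, or simply take the eigenvalues of $A_d$ for the dual polar graph, which are $\pm q^{\binom{d-i}{2}}$-type expressions with signs alternating, and feed them into the displayed sum; the alternating signs are exactly what makes $\sum_\ell P_{d\ell}^3/k_\ell^2$ telescope to zero. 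For $\DH(2d-1,q^2)$ one does the same with $q$ replaced by $q^2$ in the appropriate places and the graph $^2A_{2d-1}(q)$'s parameters. The main obstacle I anticipate is purely bookkeeping: getting the indexing of the cometric ordering right (so that ``type $d$'' really is the eigenspace relevant to $m$-ovoids, which hinges on the correct identification of the design-orthogonality condition for $m$-ovoids in dual polar spaces), and then carrying out the $q$-rational-function simplification without algebra slips. There is no conceptual difficulty beyond Corollary \ref{cor:KreinIntriguing} once $q_{dd}^d=0$ is verified; accordingly I would present the eigenvalue/valency data in a small table and leave the final cancellation as a routine check, exactly as the paper's style ("short proofs") suggests.
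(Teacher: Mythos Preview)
Your strategy coincides with the paper's for $d=3$: both apply Corollary \ref{cor:KreinIntriguing} after verifying $q_{33}^3=0$, and the paper does this by writing down the explicit $4\times4$ eigenmatrices for $\DH(5,q^2)$, $\DQ(6,q)$, $\DW(5,q)$ and computing the Krein parameters via Lemma \ref{calculating_krein}. Where you diverge is the general case $d>3$. You propose to show $q_{dd}^d=0$ for every $d\ge3$ by substituting the Krein array (or the eigenvalues of $A_d$) into the formula of Lemma \ref{calculating_krein} and simplifying. The paper does \emph{not} do this: it handles only $d=3$ algebraically, and then reduces $d>3$ to $d=3$ by a one-line geometric argument --- fixing a point, passing to the generators through it, and projecting takes an $m$-ovoid of rank $d$ to an $m$-ovoid of rank $d-1$.

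This is a genuine difference in route, and your version carries risk that the paper's does not. You never actually verify that $q_{dd}^d=0$ for $d>3$; you describe it as ``a routine check'' to be left to the reader, but you give no evidence it holds, and your appeal to the almost-$Q$-bipartite property of $\DH(2d-1,q^2)$ only gives $a_i^*=q_{1i}^i=0$ for $i<d$, which says nothing about $q_{dd}^d$. The paper's choice to bypass the general Krein computation in favour of the quotient argument suggests that either $q_{dd}^d$ does not vanish for $d>3$, or that establishing it is not the short calculation you anticipate. Either way, the reduction-to-$d=3$ step is the missing idea in your proposal: it is what makes the proof short and what the paper actually uses.
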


\begin{proof}
The matrix of eigenvalues for $\DH(5, q^2)$, $\DQ(6,q)$, and $\DW(5,q)$ are as follows (see \cite [Theorem 4.3.6]{VanhoveThesis}):
\[
P_{\DH(5,q^2)}=
\begin{pmatrix}
1 & q(q^4+q^2+1) & q^4 (q^4 + q^2 +1) & q^9 \\
1 & q^3 +  q -1 & q(q^3-q^2-1) & -q^4 \\
1 & -q^2 +q -1 & -q(q^2-q+1) & q^3 \\
1 & -q^4 - q^2 -1 & q^2(q^4 + q^2 +1) & -q^6
\end{pmatrix},
\]
and
\[
P_{\DQ(6,q)}=P_{\DW(5,q)}=
\begin{pmatrix}
1 & q(q^2+q+1) & q^3(q^2+q+1) & q^6\\
1 & q^2+q-1 & q(q^2-q-1) & -q^3\\
1 & -1 & -q^2 & q^2\\
1 & -q^2-q-1 & q(q^2+q+1) & -q^3
\end{pmatrix}.
\]

Computing the Krein parameters with Lemma \ref{calculating_krein}, $q_{11}^1>0$, $q_{22}^2 >0$, and $q_{33}^3=0$ in each instance.
Now $m$-ovoids of $\DH(5, q^2)$, $\DQ(6,q)$, and $\DW(5,q)$ are intriguing sets of type $3$, and so by 
Corollary \ref{cor:KreinIntriguing} the only nontrivial $m$-ovoids of $\DH(5,q^2)$, $\DQ(6,q)$, and $\DW(5,q)$ are hemisystems.
The result for general dimension $d>3$ follows by noting that by taking the generators on a fixed point, and then projecting,
maps an $m$-ovoid to an $m$-ovoid.
\end{proof}

It is also worth noting here that nothing more can be said about designs in $\DW(5,q)$ than what can be said about $\DQ(6,q)$ from a purely algebraic perspective, since the intersection numbers are the same for both.

\begin{remark}
The conditions of Theorem \ref{MyKreinTheorem} being satisfied does not imply the existence of a $T$-design, $\theta$, such that $|\theta| = \frac{1}{2}|\Omega|$. 
Such a $T$-design may or may not exist. By way of example, the dual polar space $\DW(5,3)$ has Krein parameter $q_{33}^3=0$, 
which implies that every $m$-ovoid of this space is a hemisystem. However, it has no hemisystems. While the dual polar space $\DQ(6,3)$ also has Krein parameter $q_{33}^3=0$, and does have hemisystems (see \cite{Bamberg:2018aa}). 
\end{remark}

The following is a long-standing open problem.
\begin{problem}
When do $\DQ(2d, q)$, $\DW(2d-1,q)$ and $\DH(2d-1,q^2)$ have hemisystems, for $d \ge 3$?
\end{problem}

\section{The Penttila-Williford scheme} \label{PenttilaWillifordScheme}

Let $\mathcal{Q}$ be a generalised quadrangle of order $(s,t)$ and let $\mathcal{Q}'$ be a generalised subquadrangle of $\mathcal{Q}$ of order $(s,t')$. 
Let $P$ be a point of $\mathcal{Q}$ not in $\mathcal{Q}'$. Then $P^\perp\cap\mathcal{Q}'$ is an
ovoid $\mathcal{O}_P$ of $\mathcal{Q}'$ \emph{subtended} by $P$. If there is exactly one other point $P'$ such that $\mathcal{O}_P$ is also subtended by 
$P'$, then we say $\mathcal{O}_P$ is \emph{doubly subtended}. If every ovoid of $\mathcal{Q}'$ is doubly subtended, then we say that 
$\mathcal{Q}'$ is doubly subtended in $\mathcal{Q}$.

Let $\mathcal{Q}$ be a generalised quadrangle of order $(s,s^2)$, where $s >2$, and let $\mathcal{Q}'$ be a doubly subtended generalised quadrangle of order $(s,s)$ contained in $\mathcal{Q}$. For example, we could take the classical generalised quadrangle $\Q^-(5,q)$ and the subquadrangle
$\Q(4,q)$. Let $\Omega$ be the set of points of $\mathcal{Q}\setminus \mathcal{Q}'$, and define the following relations on $\Omega\times \Omega$:
\begin{itemize}
\item $R_0$ is the identity relation;
\item $(X,Y)\in R_1$ if and only if $X$ and $Y$ are not collinear in $\mathcal{Q}$ and $|\mathcal{O}_X\cap \mathcal{O}_Y|=1$;
\item $(X,Y)\in R_2$ if and only if $X$ and $Y$ are not collinear in $\mathcal{Q}$ and $|\mathcal{O}_X\cap \mathcal{O}_Y|=s+1$;
\item $(X,Y)\in R_3$ if and only if $X$ and $Y$ are collinear in $\mathcal{Q}$;
\item $(X,Y)\in R_4$ if and only if $\mathcal{O}_X= \mathcal{O}_Y$.
\end{itemize}
Then the relations $\{R_i\}_{i=0}^4$ form a primitive association scheme (see \cite{Penttila:2011aa}),
which we call the \emph{Penttila-Williford scheme}. The Penttila-Williford scheme is $Q$-bipartite (see \cite{Penttila:2011aa}), and so we have the following
immediate consequence of Corollary \ref{thm:QbipartiteKrein}.

\begin{theorem} \label{lem:PenttilaWillifordIntriguing}
Let $\theta$ be a nontrivial intriguing set of type $1$, intriguing set of type $3$, or a $\{2,4\}$-design of the Penttila-Williford association scheme. Then $|\theta|=\frac{1}{2}|\Omega|$.
\end{theorem}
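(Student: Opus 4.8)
The plan is to deduce everything from Theorem \ref{thm:QbipartiteKrein}, so the only real work is checking its hypotheses. First I would recall the two structural facts about the Penttila--Williford scheme cited from \cite{Penttila:2011aa}: it is a primitive $4$-class cometric association scheme, and with respect to its cometric ordering $E_0, E_1, E_2, E_3, E_4$ it is $Q$-bipartite, i.e.\ $q_{ij}^h = 0$ whenever $i+j+h$ is odd. All the indices ``type $1$'', ``type $3$'', and ``$\{2,4\}$-design'' in the statement are understood with respect to this ordering, which is exactly the point: parity of the eigenspace indices is meaningful only once the cometric ordering is fixed.

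Next I would translate each of the three hypotheses into the form $\chi_\theta \in V_0 \perp \left(\bigperp_{i \in S} V_i\right)$ with $S \subseteq \{1,3\}$. A nontrivial intriguing set of type $1$ has $\chi_\theta \in V_0 \perp V_1$, so $S = \{1\}$; a nontrivial intriguing set of type $3$ has $\chi_\theta \in V_0 \perp V_3$, so $S = \{3\}$; and a nontrivial $\{2,4\}$-design satisfies $\chi_\theta E_2 = \chi_\theta E_4 = 0$, so since $\sum_{i=0}^4 E_i = I$ we get $\chi_\theta = \chi_\theta E_0 + \chi_\theta E_1 + \chi_\theta E_3 \in V_0 \perp V_1 \perp V_3$, i.e.\ $S = \{1,3\}$. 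In every case $S$ consists only of odd indices.

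Finally, since $\theta$ is nontrivial in each case, Theorem \ref{thm:QbipartiteKrein} applies verbatim and yields $|\theta| = \tfrac{1}{2}|\Omega|$, completing the proof. The argument is essentially immediate, and there is no genuine obstacle once Theorem \ref{thm:QbipartiteKrein} is in hand: the $Q$-bipartite property forces $q_{ij}^h = 0$ for all $i,j,h$ drawn from $\{1,3\}$, after which the peeling argument of that theorem repeatedly removes an eigenspace until $\chi_\theta$ would be forced into $V_0 = \langle \allones \rangle$ --- impossible for a nontrivial proper subset. The only point one must not overlook is that the conclusion really does require the scheme to be $Q$-bipartite with respect to the ordering in which the subsets are described; this is precisely the content imported from \cite{Penttila:2011aa}, and without it the parity bookkeeping would carry no information.
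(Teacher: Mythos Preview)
Your proposal is correct and matches the paper's approach exactly: the paper simply observes that the Penttila--Williford scheme is $Q$-bipartite (citing \cite{Penttila:2011aa}) and records the theorem as an immediate consequence of Theorem~\ref{thm:QbipartiteKrein}. Your write-up is in fact more detailed than the paper's, since you spell out explicitly that each of the three cases corresponds to $S\subseteq\{1,3\}$.
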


Bamberg and Metsch \cite{BambergMetsch2019} recently described the intriguing sets of the Penttila-Williford scheme arising from $\Q^-(5,q) \backslash \Q(4, q)$, $q>2$. They show that an intriguing set of type $1$ or type $3$ must consist of exactly half of the vertices of the scheme. Theorem \ref{lem:PenttilaWillifordIntriguing} uses only algebraic arguments and hence generalises the result of Bamberg and Metsch.

The following result of Bamberg and Lee follows as a corollary of Theorem \ref{lem:PenttilaWillifordIntriguing} and the fact that a relative $m$-cover is an intriguing set of type $1$  \cite[Corollary 3.4]{BambergLee2017}.

\begin{theorem}[\cite{BambergLee2017}]\label{bamberglee}
Given a generalised quadrangle of order $(q, q^2)$ containing a doubly subtended quadrangle of order $(q,q)$, a nontrivial relative $m$-ovoid is a relative hemisystem.
If it exists, $q$ is even.
\end{theorem}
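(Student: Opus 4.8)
The plan is to deduce this from Theorem \ref{lem:PenttilaWillifordIntriguing} together with the dictionary between relative $m$-ovoids and intriguing sets of the Penttila-Williford scheme. First I would recall the setup: we are given a generalised quadrangle $\mathcal{Q}$ of order $(q,q^2)$ containing a doubly subtended subquadrangle $\mathcal{Q}'$ of order $(q,q)$, so that the point set $\Omega = \mathcal{Q}\setminus\mathcal{Q}'$ carries the Penttila-Williford association scheme as described in Section \ref{PenttilaWillifordScheme}. A relative $m$-ovoid (relative to $\mathcal{Q}'$) is, by definition, a subset of $\Omega$ meeting each ``relative line'' in a fixed number $m$ of points; the key external input, \cite[Corollary 3.4]{BambergLee2017}, is that such a set is precisely an intriguing set of type $1$ of this scheme, i.e. $\chi_\theta \in V_0 \perp V_1$. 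I would state this as a citation rather than reprove it.

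Next I would apply Theorem \ref{lem:PenttilaWillifordIntriguing}: since $\theta$ is a nontrivial intriguing set of type $1$, we conclude $|\theta| = \tfrac12|\Omega|$. Translating back, a relative $m$-ovoid with $|\theta| = \tfrac12|\Omega|$ is exactly a relative hemisystem (the halving of the relative line parameter), which gives the first assertion. Strictly, I would remark that the hypothesis $q>2$ built into the definition of the Penttila-Williford scheme must be carried along, but since $\mathcal{Q}'$ must admit an ovoid and the whole construction presupposes $s>2$ this is already part of the ambient hypotheses; the small cases can be noted separately or absorbed into triviality.

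For the parity statement ``if it exists, $q$ is even,'' I would argue as follows: a relative hemisystem partitions, or at least imposes a regularity condition, on configurations whose counting forces $q+1$ (or an analogous parameter counting points per relative line) to behave in a way incompatible with $q$ odd. The cleanest route is to invoke the original counting argument of Bamberg and Lee directly — this is genuinely a separate divisibility/counting fact about relative hemisystems and does not follow from the Krein-parameter machinery of this paper, so I would simply cite \cite{BambergLee2017} for it rather than reprove it, and present it as the ``moreover'' clause inherited from that source. The main obstacle, and the only place where real content beyond bookkeeping enters, is ensuring that the identification in \cite[Corollary 3.4]{BambergLee2017} of relative $m$-ovoids with type-$1$ intriguing sets is stated with the correct normalisation so that ``half the vertices'' genuinely corresponds to ``relative hemisystem'' in the sense of \cite{BambergLee2017}; once that correspondence is pinned down, the theorem is immediate.
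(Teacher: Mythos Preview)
Your proposal is correct and follows essentially the same approach as the paper: the paper's proof is the single sentence preceding the theorem, which deduces it as a corollary of Theorem \ref{lem:PenttilaWillifordIntriguing} together with \cite[Corollary 3.4]{BambergLee2017} identifying relative $m$-ovoids (there called relative $m$-covers) with type-$1$ intriguing sets, leaving the parity claim to the cited source. Your additional remarks about the $q>2$ hypothesis and the normalisation check are sensible due diligence but do not depart from the paper's route.
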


We now look at a generalisation of the Penttila-Williford scheme which, although a natural extension, does not appear to be in the literature. We state it explicitly and consider some of its properties by drawing on tools from Cossidente and Pavese who studied the generalisation of relative $m$-ovoids \cite{Cossidente:2019wi}.
 Let $\Omega$ be the set of points of $\Q^-(2n-1,q)$ not contained in a fixed nondegenerate hyperplane $\Pi$.
 So $|\Omega|=q^{n-1}(q^{n-1}-1)$. 
We define $\sigma\colon \Omega\to \Omega$ as follows. Let $P$ be a point of $\Omega$.
 Then the line joining $\Pi^\perp$ and $P$ is a hyperbolic line and so contains a unique second point $P'$ of $\Omega$.
Let $\sigma$ be the central collineation of $\PG(2n-1,q)$ having axis $\Pi$ and
centre $\Pi^\perp$, mapping $P$ to $P'$. Then $\sigma$ commutes with the polarity $\perp$ and so stabilises $\Omega$.
Moreover, since $PP'$ has only two points of $\Q^-(2n-1,q)$ on it, we have $\sigma(P')=P$ and hence $\sigma^2$ is the identity.
It turns out that $\sigma$ is independent of the choice of $P$. We write $X\sim Y$ as a shorthand notation for the `collinear and not equal'
relation.

\begin{theorem}\label{genPW}
Let $q$ be a prime power, greater than $2$, and let $\Omega$ be the set of points of $\Q^-(2n-1,q)$ not contained in a fixed nondegenerate hyperplane $\Pi$.
Define the following relations on $\Omega$:
\begin{align*}
R_0&=\{(X,Y)\in \Omega\times \Omega\mid X=Y\},
\\
R_1&=\{(X,Y)\in \Omega\times \Omega\mid Y\not\sim X\sim Y^\sigma\},
\\
R_2&=\{(X,Y)\in \Omega\times \Omega\mid Y\not\sim X\not\sim Y^\sigma\},
\\
R_3&=\{(X,Y)\in \Omega\times \Omega\mid Y\sim X\not\sim Y^\sigma\},
\\
R_4&=\{(X,Y)\in \Omega\times \Omega\mid X=Y^\sigma\}.
\end{align*}
Then $(\Omega,\{R_0,R_1,R_2,R_3,R_4\})$ is an association scheme with matrix of eigenvalues:
\[
\begin{bmatrix}
1 & ( q^{n-2}-1)(q^{n-1}+1)  & q^{n-2} ( q-2) (q^{n-1}+1)  &  ( q^{n-2}-1)(q^{n-1}+1)  & 1\\
1 & {q}^{n-1}+1 & 0 & -\left( {q}^{n-1}+1\right)  & -1\\
1 & q^{n-2}-1 & -2q^{n-2} & q^{n-2}-1 & 1\\
1 & -\left( q^{n-2}-1\right)  & 0 & q^{n-2}-1 & -1\\
1 &-q^{n - 2}(q - 2)-1& 2q^{n-2}( q-2)  & -q^{n - 2}(q - 2)-1& 1
\end{bmatrix}.
\]
\end{theorem}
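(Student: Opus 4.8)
The plan is to verify directly that the partition $\{R_0,\ldots,R_4\}$ is an association scheme and then compute its eigenvalues, exploiting the central collineation $\sigma$ throughout. First I would record the basic combinatorial data of $\Q^-(2n-1,q)$: the number of points, the number of points collinear with a given point, and — crucially — the way the hyperplane section interacts with $\sigma$. Since $\sigma$ has axis $\Pi$ and centre $\Pi^\perp$ and commutes with $\perp$, for any $X,Y\in\Omega$ the relation between $X$ and $Y$ is governed entirely by whether $X\sim Y$ and whether $X\sim Y^\sigma$; because $\sigma^2=\mathrm{id}$ and $\sigma$ is an isometry of the form on $\Omega$, the relations are symmetric ($(X,Y)\in R_i \iff (Y,X)\in R_i$), and $R_4$ is a perfect matching commuting with all the $R_i$. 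I would check that every ordered pair lies in exactly one $R_i$, i.e. that the four cases ``$X=Y$'', ``$X=Y^\sigma$'', and the three collinearity patterns for $X\neq Y,Y^\sigma$ are exhaustive and disjoint; the only subtlety is ruling out $Y\sim X$ together with $Y^\sigma\sim X$ simultaneously with $X\neq Y,Y^\sigma$ — this is where the hypothesis $q>2$ and the structure of the hyperbolic line $PP'$ enter, analogously to the original Penttila--Williford setup.

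Next I would establish that the $R_i$ form an association scheme. The cleanest route is to count valencies $k_i$ (the first row of $P$) by a direct count in $\Q^-(2n-1,q)$: $k_4=1$ is immediate; $k_3$ counts points $Y$ with $Y\sim X$ but $Y^\sigma\not\sim X$, which one gets from the number of points of $\Omega$ collinear with $X$ minus a correction coming from points collinear with both $X$ and $X^\sigma$; $k_1$ is symmetric to $k_3$ via $\sigma$; and $k_2$ is whatever remains, namely $|\Omega|-1-k_1-k_3-k_4 = q^{n-2}(q-2)(q^{n-1}+1)$, matching the claimed first row. To get the full scheme structure and the eigenvalue matrix at once, I would appeal to the standard fact that a symmetric partition closed under the relevant symmetry is a scheme once one exhibits a common eigenbasis; concretely, I would borrow the representation-theoretic/geometric analysis of Cossidente and Pavese \cite{Cossidente:2019wi} on the generalisation of relative $m$-ovoids, which already controls how the adjacency operators of the ``collinear'' and ``$\sigma$-twisted collinear'' graphs act on the relevant eigenspaces of the $\Q^-(2n-1,q)$ polar-space scheme. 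Since $\sigma$ is an involution commuting with everything, each eigenspace of the polar-space collinearity graph splits into $\pm1$-eigenspaces of $\sigma$, and the $R_i$ act as scalars on each piece; reading off those scalars gives the five rows of $P$.

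Finally I would pin down the eigenvalues explicitly. With $A_i$ the adjacency matrix of $R_i$, I would use $A_3 = A_1^\sigma$ (conjugation by the permutation matrix of $\sigma$) and $A_4$ = the matrix of $\sigma$ itself, so that on the $\sigma$-eigenspace with sign $\varepsilon\in\{+1,-1\}$ we have eigenvalue $\varepsilon$ for $A_4$ and the $A_3$-eigenvalue equals $\varepsilon$ times the $A_1$-eigenvalue. That immediately forces the observed structure of the matrix (rows 2 and 4 are ``$\sigma$-odd'', rows 1, 3, 5 are ``$\sigma$-even'', $R_0$ and $R_4$ columns are $1,-1,1,-1,1$ and $1,-1,1,-1,1$ appropriately). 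The remaining freedom is a handful of numbers determined by: the known spectrum of the collinearity graph of $\Q^-(2n-1,q)$ restricted to $\Omega$ (equivalently, the eigenvalues of $A_1+A_3$, which are $0$, $2(q^{n-2}-1)$, $-2q^{n-2}(q-2)-2$ on the even part up to the valency row), together with the known eigenvalues of $A_1 - A_3$ on the odd part ($\pm(q^{n-1}+1)$ and $\mp(q^{n-1}-1)$), all of which come out of the Cossidente--Pavese computation. Substituting and separating by $\sigma$-parity yields precisely the displayed $5\times5$ matrix, and orthogonality of its rows (weighted by the $k_i$) together with the correct multiplicities confirms it is a genuine $P$-matrix, completing the proof.

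I expect the main obstacle to be the combinatorial case analysis showing the $R_i$ genuinely partition $\Omega\times\Omega$ and, more delicately, verifying the intersection-number (or common-eigenbasis) condition without simply quoting \cite{Cossidente:2019wi} wholesale — i.e. making precise exactly which of their lemmas deliver the action of the twisted collinearity graph on each eigenspace, and checking the $q>2$ hypothesis is used in the right place (it is needed so that $R_2$ is nonempty and the ``collinear with both $X$ and $X^\sigma$'' configurations behave as in the classical Penttila--Williford case).
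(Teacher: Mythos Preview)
Your approach differs genuinely from the paper's. The paper follows the Penttila--Williford template: it verifies symmetry of the relations, computes all intersection numbers $p_{ij}^h$ by direct geometric counting (the matrices $L_1,\ldots,L_4$ are listed in an appendix), and then reads the matrix of eigenvalues off the eigenvectors of $L_1$. You instead exploit the involution $\sigma$ structurally: since $A_4=S$ and $A_1=A_3S$ with $S$ commuting with $A_3$, the Bose--Mesner algebra is generated by $A_3$ and $S$, so one only needs the spectrum of the collinearity graph $A_3$ on each $\sigma$-eigenspace to recover $P$ and certify the scheme axioms simultaneously. This is more conceptual and sidesteps the full table of $p_{ij}^h$, at the cost of relying on spectral input you attribute to \cite{Cossidente:2019wi}; the paper's route is heavier computationally but entirely self-contained.

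A few points to tighten. First, the case $Y\sim X\sim Y^\sigma$ is excluded for every $q$, not just $q>2$: collinearity with both $Y$ and $Y^\sigma$ forces $X\perp\langle Y,Y^\sigma\rangle\ni\Pi^\perp$, hence $X\in\Pi$, contradicting $X\in\Omega$. The hypothesis $q>2$ is used only to make $k_2=q^{n-2}(q-2)(q^{n-1}+1)$ nonzero. Second, your stated eigenvalues contain slips: on the odd part the $A_3$-eigenvalues are $-(q^{n-1}+1)$ and $q^{n-2}-1$, so $A_1-A_3=-2A_3$ there has eigenvalues $2(q^{n-1}+1)$ and $-2(q^{n-2}-1)$, not the values you list. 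Third, and most importantly, for the common-eigenbasis argument to certify a scheme you must show there are \emph{exactly} five common eigenspaces, i.e.\ that $A_3$ has precisely three distinct eigenvalues on the $+1$-eigenspace of $S$ and two on the $-1$-eigenspace. You should verify that \cite{Cossidente:2019wi} actually delivers this (it treats relative $m$-ovoids rather than the spectrum of $A_3$ per se); if it does not, the cleanest fallback is to compute enough intersection numbers to determine the minimal polynomial of $A_3$ on each piece, which brings you partway back to the paper's method.
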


\begin{proof}
We will sketch the proof since it is similar to the proof of \cite[Theorem 1]{Penttila:2011aa}.
First, one verifies that the relations $R_i$ are symmetric and the intersection numbers $p_{ij}^h$ are well-defined. Next, 
the parameters 
$p_{ij}^h$ can be computed geometrically and we obtain the intersection matrices $L_1$, $L_2$, $L_3$, and $L_4$, which we list in Appendix \ref{ap:GenPentWillIntersection}.
The eigenvectors for $L_1$ give the matrix of eigenvalues as described above.
\end{proof}

\begin{lemma}
The association scheme described in Theorem \ref{genPW} is not metric. It is cometric precisely when $n=3$, in which case there is a single cometric ordering.
\end{lemma}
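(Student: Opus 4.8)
The plan is to read off both claims from the eigenvalue matrix $P$ of Theorem~\ref{genPW}, supplemented by the intersection matrices $L_1,\ldots,L_4$ of Appendix~\ref{ap:GenPentWillIntersection} for the metric assertion and by the Krein parameters (computed from $P$ via Lemma~\ref{calculating_krein}) for the cometric one.

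For the \emph{not metric} claim, recall that a $4$-class scheme is metric exactly when one of its graphs $\Gamma_k=(\Omega,R_k)$ is distance-regular of diameter $4$ with the $R_i$ as its distance relations; a necessary condition is then that $A_k^2 \in \langle A_0, A_k, A_j\rangle$ for a single further relation $R_j$ (the one playing the role of ``distance $2$''), with nonzero coefficient of $A_j$. I would dispose of $k=4$ immediately, since $R_4$ is the perfect matching induced by $\sigma$, so $k_4=1$ and $\Gamma_4$ has diameter $1$. For $k\in\{1,2,3\}$ I would read the coefficients of $A_k^2$ off $L_k$ and check that in each case at least two relations other than $R_0,R_k$ occur with nonzero coefficient, contradicting the displayed requirement; the identities $A_4 A_1 = A_3$, $A_4 A_3 = A_1$ (coming from $A_4^2 = A_0$) mean the cases $k=1$ and $k=3$ are interchangeable, so only $k=1$ and $k=2$ need be examined. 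This rules out every candidate generator, hence the scheme is not metric.

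For the \emph{cometric} claim I would first form $Q = |\Omega| P^{-1}$, extract the multiplicities $m_i$, and compute the Krein parameters $q_{ij}^h$ as polynomials in $q$ for each fixed $n\ge 3$. Dualising the criterion above, a necessary condition for $E_k$ to generate a cometric ordering is that $E_k\circ E_k \in \langle E_0, E_k, E_j\rangle$ for exactly one $j$, i.e.\ that precisely one of $q_{kk}^h$ with $h\notin\{0,k\}$ is nonzero; the relation $A_4 E_i = \pm E_i$ (read off the last column of $P$) again reduces the bookkeeping by pairing $E_1$ with $E_3$. I expect to find that for $n\ge 4$ every idempotent $E_k$ fails this test (two of the relevant Krein parameters are nonzero polynomials in $q$, uniformly for $q>2$), so the scheme is not cometric, whereas for $n=3$ a single $E_k$ passes it; completing the check in that case, one verifies that the full dual intersection matrix is tridiagonal and irreducible for exactly one ordering of $E_1,\ldots,E_4$ --- the ordering under which the scheme (for $n=3$, the Penttila-Williford scheme) is $Q$-bipartite.

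The main obstacle is the cometric part: unlike the metric assertion, which is a short finite calculation with the $L_k$, here one is dealing with a two-parameter family, so each Krein parameter must be decided as an identity in $\mathbb{Z}[q]$ separately for $n=3$ and for general $n\ge 4$, and one must then exclude \emph{all} orderings of the four nontrivial idempotents except the claimed one. Organising this case analysis cleanly (using the $E_1\leftrightarrow E_3$ symmetry to cut the number of orderings) is where the real work lies, together with confirming that the ``generically nonzero'' polynomials in $q$ do not accidentally vanish for some small prime power $q>2$.
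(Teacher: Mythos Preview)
Your plan is correct and closely parallel to the paper's, but the execution differs in a way worth noting. The paper computes the full dual intersection matrices $L_i^*$ (Appendix~\ref{ap:GenPentWill}) and then argues by a simple entry-count: for $n>3$ the matrices $L_1^*,L_2^*,L_3^*,L_4^*$ have respectively $10,6,10,6$ nonzero off-diagonal entries, and since any reordering of eigenspaces conjugates $L_i^*$ by a permutation matrix (fixing the diagonal), none of these can be brought to an irreducible tridiagonal form, which requires exactly $8$ such entries. The same count on the $L_i$ handles the metric claim. This sidesteps precisely the ``main obstacle'' you flag: there is no two-parameter case analysis over orderings and no need to worry about accidental vanishing, because the entries are explicit rational functions in $q$ whose zero loci are visible by inspection (for instance, the $(1,4)$ and $(4,1)$ entries of $L_1^*$ carry the factor $q^{n-2}-q$, which is exactly what collapses $L_1^*$ to tridiagonal form at $n=3$). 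Your row-by-row test on $A_k^2$ and $E_k\circ E_k$ reaches the same conclusion but with more bookkeeping.

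One small correction: the symmetry you invoke for the cometric half is not justified by $A_4 E_i=\pm E_i$. The involution $\sigma$ does swap $R_1$ with $R_3$ (so $A_4A_1=A_3$, as you use for the metric part), but since each $E_i$ is an eigenvector of $A_4$, the map $\sigma$ fixes every eigenspace and therefore gives no automatic identification of the Krein parameters attached to $E_1$ with those attached to $E_3$. Indeed $L_1^*$ and $L_3^*$ in Appendix~\ref{ap:GenPentWill} are visibly different. This does not damage your argument --- you can simply treat $E_1$ and $E_3$ separately --- but you should not rely on that shortcut.
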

\begin{proof}
The dual intersection matrices $L_i^*$ have been computed using Lemma \ref{calculating_krein} and are given in Appendix \ref{ap:GenPentWill}. Observe that $L_1^*$ is tridiagonal when $n=3$, and hence the association scheme is cometric with respect to the given ordering of eigenspaces. When $n>3$, there are $10$, $6$, $10$, and $6$ entries of $L_1^*$, $L_2^*$, $L_3^*$, and $L_4^*$, respectively, which are non-zero and not on the diagonal. Any ordering of the eigenspaces induces a permutation $\sigma \in S_4$ on the indices. Now, since $(L_i^*)_{jh}=q_{ij}^h$, it follows that $(L_{\sigma(i)})_{jh} = q_{\sigma(i) \sigma(j)}^{\sigma(h)}$, and so $L_{\sigma(i)} = T^\top L_i T$, where $T$ is the permutation matrix induced by $\sigma$. Note that the diagonal of $L_i$ is fixed by $T$, and so the rows and columns of $L_1^*$ and $L_3^*$ clearly cannot be permuted to a tridiagonal form, as they have too many non-diagonal, non-zero entries. Similarly, $L_2^*$ and $L_3^*$ have too few entries; even if they were permuted into a tridiagonal form, it would result in $b^*_x$ and $c^*_y$ being zero for some $x, y \in \{1, 2, 3, 4\}$, which is not possible. Hence the association scheme is not cometric for $n >3$. Since the number of non-diagonal, non-zero entries of $L_2^*$, $L_3^*$, and $L_4^*$ are constant for $n\ge 3$, the same argument shows that only the given ordering is a cometric ordering for $n=3$. A similar treatment shows that none of the intersection matrices (see Appendix \ref{ap:GenPentWillIntersection}) can be expressed in tridiagonal form, for $n \ge 3$, and so the association scheme is not metric.
\end{proof}

\begin{theorem} \label{lem:PenttilaWillifordIntriguing2}
Let $\theta$ be a nontrivial intriguing set of type $1$, intriguing set of type $3$, or a $\{2,4\}$-design of the association scheme described in Theorem \ref{genPW}. Then $|\theta|=\frac{|\Omega|}{2}$. If $n=3$ and $\theta$ is a nontrivial $\{2,3\}$-design, then either $\theta$ is a $\{1,2,3\}$-design or $|\theta|=\frac{|\Omega|}{2}$.
\end{theorem}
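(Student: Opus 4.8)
The plan is to apply the two main tools of the paper --- Theorem \ref{thm:QbipartiteKrein} for the first assertion, and Theorem \ref{MyKreinTheorem} (equivalently Corollary \ref{cor:cometric}) for the $n=3$ refinement --- to the scheme of Theorem \ref{genPW}. First I would establish that this scheme is $Q$-bipartite. The quickest route is to inspect the dual intersection matrices $L_i^*$ from Appendix \ref{ap:GenPentWill} and check that $q_{ij}^h = 0$ whenever $i+j+h$ is odd; alternatively, since the scheme is cometric for $n=3$ with the given ordering and one can see $a_i^* = q_{1i}^i = 0$ for all $i$ from $L_1^*$, $Q$-bipartiteness in that case is immediate, and for general $n$ one reads it off the tabulated Krein parameters directly. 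Granting $Q$-bipartiteness, the odd-indexed eigenspaces are $V_1$ and $V_3$, so intriguing sets of type $1$ and type $3$, and $\{2,4\}$-designs (whose characteristic vectors lie in $V_0 \perp V_1 \perp V_3$), all fall under Theorem \ref{thm:QbipartiteKrein}, forcing $|\theta| = \frac{1}{2}|\Omega|$.

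For the second assertion, fix $n=3$. A nontrivial $\{2,3\}$-design $\theta$ has $\chi_\theta \in V_0 \perp V_1 \perp V_4$. I want to apply Theorem \ref{MyKreinTheorem} with $S = \{1,4\}$ and $h = 1$: the conclusion would be that either $\chi_\theta \in V_0 \perp V_4$ --- i.e.\ $\theta$ is a $\{1,2,3\}$-design --- or $|\theta| = \frac{1}{2}|\Omega|$. The hypothesis to verify is that $q_{i,j}^{1} = 0$ for all $i,j \in \{1,4\}$, that is, $q_{11}^1 = q_{14}^1 = q_{44}^1 = 0$. Here $q_{11}^1 = 0$ and $q_{14}^1 = 0$ follow from $Q$-bipartiteness (the index sums $3$ and $6$... wait, $1+4+1 = 6$ is even, so that one needs checking directly), so the genuinely new input is the vanishing of $q_{44}^1$, and $q_{14}^1$ and $q_{11}^1$ should be confirmed from $L_1^*$ and $L_4^*$ in Appendix \ref{ap:GenPentWill}. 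Since $4$ is the $Q$-antipodal class (note the eigenvalue matrix has $\pm 1$ in the last column, the signature of a $Q$-antipodal scheme of index $2$), one expects $V_4$ to behave like the sign eigenspace and the relevant Krein parameters to vanish; concretely $q_{44}^1 = \frac{1}{|\Omega| m_1}\sum_\ell k_\ell Q_{\ell 4}^2 Q_{\ell 1}$ with $Q_{\ell 4} = \pm 1$, which collapses nicely.

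The one subtlety worth flagging is the direction of Theorem \ref{MyKreinTheorem}: it only guarantees the dichotomy when $q_{i,j}^h$ vanishes for \emph{all} $i,j$ in $S$, so I must be careful to record all three vanishings $q_{11}^1 = q_{14}^1 = q_{44}^1 = 0$ and not just $q_{44}^1 = 0$. Once these are in hand, the theorem applies verbatim. I expect the main obstacle to be purely bookkeeping: correctly extracting the needed entries of the $L_i^*$ matrices in the appendix and matching indices to eigenspaces, since a single transposition error in the Krein array would invalidate the $Q$-bipartite claim. A clean way to sidestep the case analysis is to note that the first assertion is literally an instance of Theorem \ref{thm:QbipartiteKrein} applied to the $Q$-bipartite scheme, and the second is Corollary \ref{cor:cometric} with $h = 4$... no --- with the cometric ordering fixed, a $\{2,3\}$-design has $\chi_\theta \in V_0 \perp V_1 \perp V_4$, and taking $h=1$, $S = \emptyset$ inside $\{1,\ldots,\lceil 1/2\rceil - 1\} = \emptyset$ is degenerate, so Theorem \ref{MyKreinTheorem} applied directly with $S=\{1,4\}$, $h=1$ is the cleanest formulation, and that is what I would write up.
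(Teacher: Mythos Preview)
Your proposal is correct, and for the second assertion it is essentially identical to the paper's argument: both verify $q_{11}^1=q_{14}^1=q_{41}^1=q_{44}^1=0$ when $n=3$ (with $q_{11}^1=0$ already holding for all $n$, and $q_{14}^1=0$ being the condition that genuinely singles out $n=3$, since $(L_1^*)_{41}=\frac{(q^n-q)(q^{n-2}-q)}{2(q+1)^2}$) and then invoke Theorem~\ref{MyKreinTheorem} with $S=\{1,4\}$, $h=1$.

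For the first assertion your route diverges slightly from the paper's. The paper simply lists the eight specific Krein parameters $q_{11}^1,q_{33}^3,q_{13}^1,q_{31}^1,q_{33}^1,q_{11}^3,q_{13}^3,q_{31}^3$ read off Appendix~\ref{ap:GenPentWill} and appeals to Theorem~\ref{MyKreinTheorem} (iterated, as in the proof of Theorem~\ref{thm:QbipartiteKrein}). You instead observe that the scheme is $Q$-bipartite for \emph{every} $n\ge 3$ --- which is true, as one checks that all four $L_i^*$ in Appendix~\ref{ap:GenPentWill} have a checkerboard pattern of zeros --- and then apply Theorem~\ref{thm:QbipartiteKrein} directly. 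This is a cleaner and more conceptual packaging of the same computation; it has the added benefit of recording a structural fact about the generalised scheme that the paper does not state explicitly. Two small cautions: your shortcut ``$a_i^*=0$ implies $Q$-bipartite'' is only stated in the paper for cometric schemes, so for $n>3$ you must fall back on the full checkerboard verification (as you indicate); and your aside about $Q$-antipodality and $Q_{\ell 4}=\pm 1$ conflates the $P$ and $Q$ matrices --- the $\pm 1$ column in Theorem~\ref{genPW} is a column of $P$, reflecting that $R_4$ has valency $1$, not a dual statement --- so that heuristic should be dropped from the write-up.
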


\begin{proof}
Recall that $q_{ij}^h = (L_i^*)_{jh}$. By inspecting the dual intersection matrices (Appendix \ref{ap:GenPentWill}), we see that the following Krein parameters always vanish:
\[
q_{11}^1,\enspace q_{33}^3,\enspace q_{13}^1,\enspace q_{31}^1,\enspace q_{33}^1,\enspace q_{11}^3,\enspace q_{13}^3,\enspace q_{31}^3.
\]
Additionally, when $n=3$, the following also vanish:
\[
q_{44}^1, \enspace q_{1,4}^1, \enspace q_{41}^1.\qedhere
\]
\end{proof}

A \emph{relative $m$-ovoid} of $\Q^-(2n - 1, q)$, with respect to a nondegenerate hyperplane $\Pi$, is a subset
$\mathcal{R}$ of points of $\Q^-(2n - 1, q) \setminus \Pi$ such that every generator of $\Q^-(2n - 1, q)$ not contained in $\Pi$ meets 
$\mathcal{R}$ in $m$ points.
A relative $m$-ovoid is said to be nontrivial or proper if it is nonempty and not the entire set of points of $\Q^-(2n - 1, q) \setminus \Pi$. 
It is a relative hemisystem if it comprises half of the points of $\Q^-(2n - 1, q) \setminus \Pi$. Now the points of $\Q^-(2n - 1, q) \setminus \Pi$
in a generator of $\Q^-(2n - 1, q)$ (not contained in $\Pi$) form a clique of size $q^{n-2}$ for the $R_3$-relation and so has inner distribution vector
$a=(1,0,0,q^{n-2}-1,0)$. Now 
\[
aQ=\left(q^{n-2},0,\frac{q^{n-2}(q-2) (q+1)  (q^{n-1}-1)}{2 (q-1)},\frac{1}{2} q^{n-1} (q^{n-1}-1),\frac{q^{n-1} (q^{n-2}-1)}{q-1}\right)
\]
and so a relative $m$-ovoid is an intriguing set of type 1 (by \cite[3.3, 3.4]{roos} and \cite[Proposition 2.5.2]{Brouwer:1989aa}).

\begin{corollary}[{\cite[Theorem 2.4]{Cossidente:2019wi}}]
Let $\mathcal{R}$ be a proper relative $m$-ovoid of $\Q^-(2n - 1, q)$. Then $q$ is even and $\mathcal{R}$ is a relative hemisystem.
\end{corollary}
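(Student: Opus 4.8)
The plan is to recognise a relative $m$-ovoid as a nontrivial intriguing set of type $1$ of the association scheme of Theorem~\ref{genPW}, apply Theorem~\ref{lem:PenttilaWillifordIntriguing2} to force it to be a relative hemisystem, and then extract the parity of $q$ from a flag count.

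First I would note that the paragraph preceding the corollary already supplies the key algebraic input: a \emph{reduced generator} (the set of points outside $\Pi$ on a generator of $\Q^-(2n-1,q)$ not contained in $\Pi$) is an $R_3$-clique of size $q^{n-2}$ with inner distribution $a=(1,0,0,q^{n-2}-1,0)$, its transform $aQ$ is supported on $\{0,2,3,4\}$ (using $q>2$ and $n\ge 3$), and hence by the design-orthogonality results \cite[3.3, 3.4]{roos} and \cite[Proposition 2.5.2]{Brouwer:1989aa} any subset of $\Omega$ meeting every reduced generator in a constant number of points has characteristic vector lying in $V_0\perp V_1$. Since a relative $m$-ovoid $\mathcal{R}$ of $\Q^-(2n-1,q)$ is exactly a subset of $\Omega$ meeting every reduced generator in $m$ points, and since ``proper'' for $\mathcal{R}$ corresponds to ``nontrivial'' for its characteristic vector, $\mathcal{R}$ is a nontrivial intriguing set of type $1$ of the scheme of Theorem~\ref{genPW}. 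Theorem~\ref{lem:PenttilaWillifordIntriguing2} then forces $|\mathcal{R}|=\tfrac{1}{2}|\Omega|=\tfrac{1}{2}q^{n-1}(q^{n-1}-1)$, i.e.\ $\mathcal{R}$ is a relative hemisystem.

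It remains to deduce that $q$ is even, for which I would count the flags $(P,\Sigma)$ with $P\in\mathcal{R}$ and $\Sigma$ a reduced generator. Each reduced generator contains $q^{n-2}$ points of $\Omega$ and meets $\mathcal{R}$ in exactly $m$ of them, while every point of $\Omega$ lies on the same number $r$ of reduced generators: every generator of $\Q^-(2n-1,q)$ through a point outside $\Pi$ automatically fails to be contained in $\Pi$, distinct generators through a point restrict to distinct reduced generators (a generator is spanned by its points outside $\Pi$), and the number of generators of $\Q^-(2n-1,q)$ through a point is a constant. Writing $N$ for the number of reduced generators, counting these flags, and then counting the flags $(P,\Sigma)$ with $P\in\Omega$, gives $mN=r|\mathcal{R}|$ and $q^{n-2}N=r|\Omega|$, whence $m/q^{n-2}=|\mathcal{R}|/|\Omega|=\tfrac{1}{2}$. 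Thus $m=q^{n-2}/2$, and since $m$ is an integer and $n\ge 3$, the prime power $q$ is even.

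I do not expect a serious obstacle: once Theorems~\ref{genPW} and~\ref{lem:PenttilaWillifordIntriguing2} and the identification of relative $m$-ovoids with intriguing sets of type $1$ are in hand, the argument is bookkeeping. The only points needing care are verifying that a reduced generator is genuinely an $R_3$-clique with the stated inner distribution (so that the Roos design-orthogonality machinery applies), and the elementary regularity statement underpinning the flag count; one should also keep the standing hypotheses $q>2$ and $n\ge 3$ in mind, since for $n=2$ the scheme of Theorem~\ref{genPW} degenerates (the relations $R_1,R_3$ become empty as $\Q^-(3,q)$ has no lines) and no proper relative $m$-ovoid exists in that case anyway.
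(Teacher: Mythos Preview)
Your proposal is correct and follows exactly the route the paper intends: identify a proper relative $m$-ovoid as a nontrivial intriguing set of type~$1$ via the inner-distribution computation in the paragraph preceding the corollary, then invoke Theorem~\ref{lem:PenttilaWillifordIntriguing2} to obtain $|\mathcal{R}|=\tfrac12|\Omega|$. The paper leaves the parity conclusion implicit, and your flag count yielding $m=q^{n-2}/2$ is the standard way to make it explicit.
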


Relative hemisystems of $\Q^-(4n+1, q)$ are known to exist for $q$ even and $n\ge 2$ \cite[Theorem 3.3]{Cossidente:2019wi}. Other than for $\Q^-(7,2)$, where no relative hemisystem exists \cite[Remark 3.5]{Cossidente:2019wi}, the question of their existence is open for $\Q^-(4n-1, q)$, $q$ even and $n \ge 2$.

\begin{problem}
Do there exist relative hemisystems of $\Q^-(4n-1, q)$, $q$ even and $n \ge 2$?
\end{problem}

\section{Generalised octagons}

A finite generalised octagon is a partial linear space such that (i) there are no ordinary $n$-gons in the geometry with $n<8$, and (ii) there exists an ordinary octagon in the geometry. If there are parameters $(s,t)$ such that every line is incident with $s+1$ points and every point is incident with $t+1$ lines, then we say that the generalised octagon has order $(s,t)$. If a generalised octagon of order $(s,t)$ has $s,t\ge 3$, then we say it is \emph{thick}. The dual incidence structure  of a generalised octagon of order $(s,t)$ (whereby points and lines are interchanged) is again a generalised octagon, but with order $(t,s)$. The only known examples of finite thick generalised octagons are the Ree-Tits octagons of order $(q,q^2)$ and their duals, where $q$ is an odd power of 2. Indeed, they are the natural geometries for the exceptional groups of Lie type $\phantom{}^2F_4(q)$. Now it is customary in the theory of generalised polygons with \emph{gonality} at least 6 to use the term \emph{distance-$j$-ovoid} for a maximum\footnote{in particular, obtaining a natural upper bound} set of points mutually at distance $j$, and reserve the term \emph{ovoid} for a maximum set of pairwise opposite points. However, for convenient notation, we will still stipulate that an $m$-ovoid of a generalised polygon is a set of points such that every line meets it in $m$ points; in the original vein of \cite[\S3]{Thas:1989us}. The authors believe the following
result is new.

\begin{theorem}\label{octagon}
A generalised octagon of order $(s, s^2)$ does not contain a nontrivial $m$-ovoid.
\end{theorem}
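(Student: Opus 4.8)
The plan is to mimic the strategy used for generalised quadrangles of order $(s,s^2)$ (Corollary \ref{CameronGoethalsSeidel1978_Strongly}) and for the dual polar spaces (Theorem \ref{DQDWDH}): realise the relevant collinearity (or point-at-distance) structure as an association scheme, locate a vanishing Krein parameter $q_{hh}^h$, observe that an $m$-ovoid is an intriguing set of type $h$, and then invoke Corollary \ref{cor:KreinIntriguing} to force $|\theta| = \tfrac12|\Omega|$. The twist here is that a hemisystem (an $\tfrac{s+1}{2}$-ovoid) requires $s$ to be odd, whereas the only known octagons have $s = q$ a power of $2$; in fact one should be able to rule out even the hemisystem and hence conclude that \emph{no} nontrivial $m$-ovoid exists. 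I would therefore aim to combine the Krein argument with a parity/divisibility obstruction.

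First I would set up the point graph of a generalised octagon $\mathcal{O}$ of order $(s, s^2)$: points of $\mathcal{O}$ with $R_i$ being ``at distance $2i$ in the incidence graph'', for $i = 0,1,2,3,4$. This is the standard distance-regular point graph of a generalised octagon, and its intersection array is classical (it depends only on $s$ and $t = s^2$; see e.g.\ Brouwer--Cohen--Neumaier). From the intersection array one computes the matrix of eigenvalues $P$ (the distinct eigenvalues of the point graph of a generalised octagon are $s+t$ and the roots associated with $\pm\sqrt{2s}$, $0$, $-\sqrt{2s}$ patterns). Substituting $t = s^2$ and applying Lemma \ref{calculating_krein} one checks that some $q_{hh}^h = 0$ — the feasibility conditions for generalised octagons (analogous to the Krein conditions that bound generalised hexagons) single out the ``extremal'' case $t = s^2$ precisely as the one where a particular Krein parameter degenerates to zero; I expect $h$ to be the index corresponding to ``distance $8$'', i.e.\ opposite points, mirroring how $h = 3$ worked for the dual polar spaces. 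An $m$-ovoid of $\mathcal{O}$ meets every line in $m$ points, so its characteristic vector lies in $V_0 \perp V_h$ for that $h$ (this is the standard fact that $m$-ovoids of generalised polygons are intriguing sets of the ``opposite'' type — one checks it via the inner distribution of a line-clique, exactly as done for relative $m$-ovoids just above Theorem \ref{octagon}). Corollary \ref{cor:KreinIntriguing} then gives $|\theta| = \tfrac12|\Omega|$.

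The final step is to convert $|\theta| = \tfrac12|\Omega|$ into an arithmetic contradiction. An $m$-ovoid with $|\theta| = \tfrac12|\Omega|$ has $m = \tfrac{s+1}{2}$ (count incident point--line pairs: $m \cdot (\text{number of lines}) = (t+1)\cdot|\theta|$ and $(s+1)\cdot(\text{number of lines}) = (t+1)\cdot|\Omega|$, so $m = \tfrac{s+1}{2}$), which already forces $s$ odd. But the Feit--Higman / Haemers--Roos restrictions for generalised octagons force $s$ and $t$ to satisfy strong divisibility relations; with $t = s^2$ one knows $2s$ must be a perfect square, so $s = 2r^2$ for some integer $r$, hence $s$ is even — contradicting $s$ odd. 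I would therefore conclude that no nontrivial $m$-ovoid can exist.

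The main obstacle I anticipate is the final arithmetic step: I need a clean, citable reason why a generalised octagon of order $(s, s^2)$ cannot have $s$ odd. The eigenvalue-integrality condition (the multiplicities of the point graph must be positive integers, which forces $2s$ to be a square) should do it, but I would want to double-check that this is exactly the known constraint and that it genuinely excludes all odd $s$ rather than merely most of them; if the integrality argument only constrains $s$ when $s \neq t$ in a weaker way, I may instead need to appeal directly to the classification-type result that $s$ must be even for octagons of order $(s,s^2)$. Everything else — writing down $P$, computing the Krein parameter, and checking the intriguing-set property — is routine linear algebra of the same flavour as the generalised-quadrangle and dual-polar-space cases already carried out in the paper.
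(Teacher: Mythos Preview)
Your proposal is correct and follows essentially the same route as the paper: set up the distance-regular point graph, identify an $m$-ovoid as an intriguing set of the appropriate type, verify via Lemma \ref{calculating_krein} that the relevant $q_{hh}^h$ vanishes when $t=s^2$, apply Corollary \ref{cor:KreinIntriguing} to force a hemisystem, and finish with Feit--Higman ($2st$ a perfect square forces $s$ even) for the parity contradiction. One small correction worth noting: the eigenspace in which an $m$-ovoid lives is not the one you guessed. In the paper's ordering it is $V_2$, the eigenspace for the eigenvalue $-(t+1)$, not a ``distance-$8$/opposite'' index; concretely, a line has inner distribution $(1,s,0,0,0)$ and only $(aQ)_2=0$, so by design--antidesign duality an $m$-ovoid is a $\{1,3,4\}$-design, i.e.\ an intriguing set of type $2$, and it is $q_{22}^2$ that vanishes at $t=s^2$. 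Your stated method (compute the inner distribution of a line and take its MacWilliams transform) would have revealed this upon execution, so this is a cosmetic rather than structural issue.
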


\begin{proof}
Let $\mathcal{O}$ be a generalised octagon of order $(s,t)$. The collinearity graph on the points of $\mathcal{O}$ is distance-regular with intersection array
\[
\{ s(t+1), st, st, st ; 1, 1, 1, t+1 \},
\]
and hence with intersection matrix
\[
L=\begin{bmatrix}
0 & s(t+1) & 0 & 0 & 0 \\
1 & s-1 & st &  0 & 0 \\
0 & 1 & s-1 & st & 0 \\
0 & 0 & 1 & s-1 & st \\
0 & 0 & 0 & t+1 & (s-1)(t+1)
\end{bmatrix}.
\]
The eigenvalues of $L$ are $s-1$ , $s-1 - \sqrt{2st}$, $s -1 +  \sqrt{2st}$, $-(t+1)$,  and $s (1 + t)$, from which we can compute the eigenvectors of $L$ via standard sequences and the eigenvalue multiplicities via Biggs' formula (cf. \cite[pp. 13--4]{Dam:2016aa}). After scaling by the eigenvalue multiplicities, the eigenvectors form the columns of the matrix of dual eigenvalues, $Q$. With the aid of Mathematica \cite{Mathematica}, we computed the matrix of eigenvalues to be
\[Q=
\begin{bmatrix}
 1 & 1 & 1 & 1 & 1 \\
 1 & \frac{s-1}{s (t+1)} & -\frac{1}{s} & \frac{\sqrt{2st}+s-1}{s (t+1)} & \frac{-\sqrt{2st}+s-1}{s(t+1)} \\
 1 & -\frac{1}{s t} & \frac{1}{s^2} & \frac{(t-1)\sqrt{s}+\sqrt{2t} (s-1)}{s^{3/2} t (t+1)} & \frac{(t-1)\sqrt{s}-\sqrt{2t} (s-1)}{s^{3/2} t (t+1)} \\
 1 & \frac{1-s}{s^2 t(t+1)} & -\frac{1}{s^3} & \frac{-\sqrt{2s/t}+s-1}{s^2 t(t+1)} & \frac{\sqrt{2s/t}+s-1}{s^2t(t+1)} \\
 1 & \frac{1}{s^2 t^2} & \frac{1}{s^4} & -\frac{1}{s^2 t^2} & -\frac{1}{s^2 t^2} \\
\end{bmatrix}\cdot \,\mathrm{diag}(1,m_1,m_2,m_3,m_4)
\]
where
\begin{align*}
m_1 &=\frac{st (s+1) (t+1) (s t+1) \left(s^2 t^2+1\right)}{4 \left((s-1)^2 t+\sqrt{2st} (s-1) (t-1) + s (t-1)^2+2 s t\right)},\\
m_2 &= \frac{st (s+1) (t+1) (s t+1) \left(s^2 t^2+1\right)}{4 \left((s-1)^2 t-\sqrt{2st} (s-1) (t-1) +s (t-1)^2+2 s t\right)},\\
m_3 &= \frac{st (s+1) (t+1) \left(s^2 t^2+1\right)}{2 (s+t)},\\
m_4 &= \frac{s^4 (s t+1) \left(s^2 t^2+1\right)}{(s+t) \left(s^2+t^2\right)}.
\end{align*}

Consider the point-set of a line. It has size $s+1$ and is a clique with respect to the first relation. Hence it has inner distribution vector $a = (1, s, 0, 0, 0)$.
Considering the MacWilliams transform (see \cite[Proposition 2.5.2]{Brouwer:1989aa}),
\begin{align*}
aQ = \bigg(&1+s,\; \frac{s(1+s)t(s+t)(1+st)(1+s^2 t^2)}{4(t+s^2t +\sqrt{2st} - \sqrt{2st^3} + s(t-1)(t-1+\sqrt{2st}))},\; 0,\\
& \frac{s(1+s)t(s+\sqrt{2st}+t)(1+s^2 t^2)}{2(s+t)},\; \frac{s^4(s-\sqrt{2st}+t)(1+st)(1+s^2 t^2)}{(1+t)(s+t)(s^2+t^2)} \bigg),
\end{align*}
we see that only $aQ_2 =0$ for $s,t>0$. 
Hence by \cite[(3.27)]{delsarte} a line is a $\{2\}$-design. Thus an $m$-ovoid of $\mathcal{O}$ is a $\{1,3,4\}$-design (by \cite[Corollary 3.3]{roos}). In order to apply Theorem \ref{MyKreinTheorem} to $m$-ovoids of $\mathcal{O}$, we thus require $q_{22}^2=0$.

Since we are interested in when $q_{22}^2$ vanishes, by Lemma \ref{calculating_krein}, we need only compute:
\[
\sum_{\ell=0}^4 k_\ell Q_{\ell2}^3 = \frac{(s-1)(1+s)^4(s^2-t)t^3(1+t)^3(1+st)^3(s^4+t^2)(1+s^2t ^2)^3}{64s^5 \bigg( t+ s^2 t - \sqrt{2st}+ \sqrt{2st^3} - s(t-1)(1-t+\sqrt{2st})\bigg)^3}.
\]
The numerator of this sum, and hence $q_{22}^2$, is clearly zero when $s^2=t$ or $s=1$.
Thus by Theorem \ref{MyKreinTheorem} a non-trivial $m$-ovoid is a hemisystem  when $s^2 = t$. However, $2st=2s^3$ is a perfect square by Feit-Higman \cite{FeitHigman}, so $2$ divides $s$ resulting in an odd number of points on a line. A  hemisystem must contain exactly half of the points on every line and so an $m$-ovoid must be trivial.
\end{proof}

Recalling that the Ree-Tits octagon has order $(q, q^2)$:

\begin{corollary}
The Ree-Tits octagon does not contain any nontrivial $m$-ovoids.
\end{corollary}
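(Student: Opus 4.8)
The plan is to follow the template set by the preceding proofs (partial geometries, dual polar spaces) and reduce the statement to an application of Theorem~\ref{MyKreinTheorem} together with a parity obstruction. First I would record the combinatorial structure of a generalised octagon of order $(s,t)$: its collinearity graph is distance-regular of diameter $4$ with the standard intersection array $\{s(t+1),st,st,st;1,1,1,t+1\}$. From the intersection matrix $L$ I would extract the eigenvalues ($s-1$ with the two ``$\pm\sqrt{2st}$'' perturbations, $-(t+1)$, and $s(t+1)$), then build the matrix of dual eigenvalues $Q$ via standard sequences for the eigenvectors and Biggs' formula for the multiplicities $m_1,\dots,m_4$. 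This is bookkeeping; it can be delegated to a computer algebra system (as the authors do with Mathematica).

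Next I would identify what kind of Delsarte design an $m$-ovoid is. A line is a clique of size $s+1$ for the first relation, so it has inner distribution $a=(1,s,0,0,0)$; computing the MacWilliams transform $aQ$ shows that only the second coordinate vanishes (for $s,t>0$), so a line is a $\{2\}$-design, and hence by Roos's complementation result an $m$-ovoid is a $\{1,3,4\}$-design. To apply Theorem~\ref{MyKreinTheorem} with $S=\{2\}$ one needs exactly $q_{22}^2=0$. Using Lemma~\ref{calculating_krein}, this reduces to computing $\sum_{\ell=0}^4 k_\ell Q_{\ell 2}^3$; after clearing denominators the numerator factors and carries a factor $(s^2-t)(s-1)$, so $q_{22}^2=0$ precisely when $t=s^2$ (the case $s=1$ being non-thick/degenerate). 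Thus for $t=s^2$, Theorem~\ref{MyKreinTheorem} forces a nontrivial $m$-ovoid to be a hemisystem, i.e.\ to contain exactly half the points of every line.

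Finally I would invoke the parity obstruction. A hemisystem needs $(s+1)/2$ points on each line, so $s$ must be odd. But for a generalised octagon the Feit--Higman conditions require $2st$ to be a perfect square; with $t=s^2$ this is $2s^3$, a square, forcing $s$ to be even. These two conclusions are incompatible, so no nontrivial $m$-ovoid can exist when $t=s^2$. The Ree--Tits octagons of order $(q,q^2)$ are then immediately covered as a corollary.

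The main obstacle I anticipate is purely computational rather than conceptual: correctly assembling $Q$ (getting the multiplicities and the $\sqrt{2st}$ branch assignments right) and then verifying that the numerator of $q_{22}^2$ really does split off the factor $s^2-t$ with no spurious extra real roots in the feasible range $s,t\ge 1$. The structural part---line is a $\{2\}$-design $\Rightarrow$ $m$-ovoid is a $\{1,3,4\}$-design $\Rightarrow$ apply Theorem~\ref{MyKreinTheorem} $\Rightarrow$ hemisystem $\Rightarrow$ parity contradiction---is short and robust.
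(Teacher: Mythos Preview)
Your proposal is correct and follows essentially the same route as the paper: you re-derive Theorem~\ref{octagon} via exactly the argument the paper uses (intersection array $\to$ $Q$ and multiplicities $\to$ line is a $\{2\}$-design $\to$ $m$-ovoid is a $\{1,3,4\}$-design $\to$ $q_{22}^2=0$ iff $t=s^2$ $\to$ hemisystem by Theorem~\ref{MyKreinTheorem} $\to$ Feit--Higman parity contradiction), and then observe that the Ree--Tits octagon, having order $(q,q^2)$, is covered as an immediate corollary. The paper's proof of the corollary is likewise just the one-line remark that the Ree--Tits octagon has order $(q,q^2)$, with all the work done in Theorem~\ref{octagon}.
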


\begin{remark}
The dual Ree-Tits octagon of order $(4,2)$ has $m$-ovoids for every possible $m\in\{1,2,3,4\}$. They are not difficult to construct by computer if one uses
the orbits of a Sylow $13$-subgroup of $\phantom{}^2F_4(2)'$.
\end{remark}

\begin{problem}
 When do $m$-ovoids of the dual Ree-Tits octagons exist? Are there infinite families?
\end{problem}

\appendix
\section{Intersection matrices for the generalisation of the Penttila-Williford association scheme} \label{ap:GenPentWillIntersection}

{\small
\[
L_1 = 
\begin{bmatrix}
0 & (q^{n-2}-1)(q^{n-1}+1) & 0 & 0 & 0\\
1 & q^{2n-4} & q^{n + 1} - 2 q^n & q^{n-2} (q^{n-2}-q+1)-2 & 0\\
0 & q^{n-2}(q^{n-2}-1) &  (q^{n-2}-1)((q-2) q^{n-2}+1) & q^{n-2}(q^{n-2}-1) & 0\\
0 & q^{n-2} (q^{n-2}-q+1)-2 &q^{n + 1} - 2 q^n & q^{2n-4} & 1\\
0 & 0 & 0 & (q^{n-2}-1)(q^{n-1}+1)  & 0
\end{bmatrix}
\]

\[
L_2=
\begin{bmatrix}
 0 & 0 & (q-2) q^{n-2} \left(q^{n-1}+1\right) & 0 & 0 \\
 0 & (q-2) q^{2 (n-2)} & (q-2) q^{n-2} \left((q-2) q^{n-2}+1\right) & (q-2) q^{2 (n-2)} & 0 \\
 1 & (q^{n-2}-1) ((q-2) q^{n-2}+1) & q^{n-2} \left((q-2)^2 q^{n-2}+3 q-8\right) & (q^{n-2}-1) ((q-2) q^{n-2}+1) & 1 \\
 0 & (q-2) q^{2 (n-2)} & (q-2) q^{n-2} \left((q-2) q^{n-2}+1\right) & (q-2) q^{2 (n-2)} & 0 \\
 0 & 0 & (q-2) q^{n-2} \left(q^{n-1}+1\right) & 0 & 0 \\
\end{bmatrix}
\]

\[
L_3=
\begin{bmatrix}
 0 & 0 & 0 & (q^{n-1}+1) (q^{n-2}-1) & 0 \\
 0 & q^{n-4} \left(q^n-q^3+q^2\right)-2 & (q-2) q^{2 (n-2)} & q^{2 (n-2)} & 1 \\
 0 & q^{n-2} \left(q^{n-2}-1\right) & (q^{n-2}-1) ((q-2) q^{n-2}+1) & q^{n-2} \left(q^{n-2}-1\right) & 0 \\
 1 & q^{2 (n-2)} & (q-2) q^{2 (n-2)} & q^{n-4} \left(q^n-q^3+q^2\right)-2 & 0 \\
 0 & (q^{n-1}+1) (q^{n-2}-1)& 0 & 0 & 0 \\
\end{bmatrix}
\]

\[
L_4=
\begin{bmatrix}
 0 & 0 & 0 & 0 & 1 \\
 0 & 0 & 0 & 1 & 0 \\
 0 & 0 & 1 & 0 & 0 \\
 0 & 1 & 0 & 0 & 0 \\
 1 & 0 & 0 & 0 & 0 \\
\end{bmatrix}
\]
}

\newpage
\section{Dual intersection matrices for the generalisation of the Penttila-Williford association scheme} \label{ap:GenPentWill}

{\small
\[
L_1^* =
\begin{bmatrix}
 0 & \frac{\left(q^n-q\right) \left(q^{n-2}-1\right)}{2 (q+1)} & 0 & 0 & 0 \\
 1 & 0 & \frac{(q-2) \left(q^{2 (n-1)}-1\right)}{2  \left(q^2-1\right)} & 0 & \frac{\left(q^n+q\right) \left(q^{n-2}-q\right)}{2 \left(q^2-1\right)} \\
 0 & \frac{\left(q^n-q\right) \left(q^{n-2}-1\right)}{2 (q+1)^2} & 0 & \frac{\left(q^n-q\right) \left(q^{n-1}-q\right)}{2  (q+1)^2} & 0 \\
 0 & 0 & \frac{(q-2) \left(q^n-q\right) \left(q^{n-2}-1\right)}{2  \left(q^2-1\right)} & 0 & \frac{\left(q^n-q\right) (q^{n-2}-1)}{2 (q^2-1)} \\
 0 & \frac{\left(q^n-q\right) \left(q^{n-2}-q\right)}{2 (q+1)^2} & 0 & \frac{q\left(q^{n-1}-1\right)^2}{2 (q+1)^2} & 0 \\
\end{bmatrix}
\]

\[
L_2^*=
\begin{bmatrix}
 0 & 0 & \frac{(q-2) \left(q^{2 n-2}-1\right)}{2 (q-1) } & 0 & 0 \\
 0 & \frac{(q-2) \left(q^{2 n-2}-1\right)}{2 \left(q^2-1\right)} & 0 & \frac{(q-2) \left(q^{2 n-1}-q\right)}{2  \left(q^2-1\right)} & 0 \\
 1 & 0 & \frac{(q-2)^2 q^{2 n-2}+(q-3) q^{n}-2 q^2+7q-4 }{2 (q-1)^2 } & 0 & \frac{\left(q^n-q^2\right) \left((q-2) q^{n-2}+1\right)}{2 (q-1)^2 } \\
 0 & \frac{(q-2) \left(q^n-q\right) \left(q^{n-2}-1\right)}{2 \left(q^2-1\right)} & 0 & \frac{(q-2) \left(q^{n-1}-1\right) \left(q^n+2 q+1\right)}{2 (q^2-1)} & 0 \\
 0 & 0 & \frac{(q-2) \left(q^n-q\right) \left((q-2) q^{n-2}+1\right)}{2 (q-1)^2 } & 0 & \frac{(q-2) \left(q^{n-1}-1\right)^2}{2 (q-1)^2 } \\
\end{bmatrix}
\]

\[
L_3^*=
\begin{bmatrix}
 0 & 0 & 0 & \frac{q^{2 n-1}-q}{2  (q+1)} & 0 \\
 0 & 0 & \frac{(q-2) \left(q^{2 n-1}-q\right)}{2  \left(q^2-1\right)} & 0 & \frac{q^{2 n-1}-q}{2 (q^2-1)} \\
 0 & \frac{\left(q^n-q\right) \left(q^{n-1}-q\right)}{2 (q+1)^2} & 0 & \frac{\left(q^n-q\right) \left(q^n+2 q+1\right)}{2 (q+1)^2} & 0 \\
 1 & 0 & \frac{(q-2) \left(q^{n-1}-1\right) \left(q^n+2 q+1\right)}{2 \left(q^2-1\right)} & 0 & \frac{\left(q^n+q+2\right) \left(q^{n-1}-q\right)}{2 \left(q^2-1\right)} \\
 0 & \frac{\left(q^{n-1}-1\right)^2}{2 (q+1)^2} & 0 & \frac{\left(q^n-q\right) \left(q^n+q+2\right)}{2 (q+1)^2} & 0 \\
\end{bmatrix}
\]

\[
L_4^*=
\begin{bmatrix}
 0 & 0 & 0 & 0 & \frac{\left(q^n+q\right) \left(q^{n-2}-1\right)}{2 (q-1)} \\
 0 & \frac{\left(q^n+q\right) \left(q^{n-2}-q\right)}{2 \left(q^2-1\right)} & 0 & \frac{q^{2 n-1}-q}{2 (q^2-1)} & 0 \\
 0 & 0 & \frac{(q^{n-2}-1) \left(q^{n+1}-2 q^n+q^2\right)}{2 (q-1)^2 } & 0 & \frac{(q^n-q) (q^{n-2}-1)}{2 (q-1)^2} \\
 0 & \frac{\left(q^n-q\right) (q^{n-2}-1)}{2 (q^2-1)} & 0 & \frac{\left(q^n+q+2\right) \left(q^{n-1}-q\right)}{2 \left(q^2-1\right)} & 0 \\
 1 & 0 & \frac{(q-2) \left(q^{n-1}-1\right)^2}{2 (q-1)^2} & 0 & \frac{q^{2 n-2}-(q^2-4q+5) q^{n-1}+(4-3 q) q}{2 (q-1)^2 } \\
\end{bmatrix}
\]
}


\end{document}